
\documentclass[12pt]{article}

\usepackage{epsfig}
\usepackage{latexsym}
\usepackage{amsmath}
\usepackage{amsfonts}
\usepackage{amssymb}
\usepackage{graphicx}%
\usepackage{algorithmic}
\usepackage{color}

\usepackage{hyperref} 

\hypersetup{
    colorlinks=true,            
    linkcolor=black,              
    citecolor=black,            
    filecolor=black,   
    urlcolor=black            
}

\setlength{\textheight}{8.5in} \setlength{\textwidth}{6.2in}
\setlength{\oddsidemargin}{0in}

\makeatletter

\newcommand{\Rmnum}[1]{\expandafter\@slowromancap\romannumeral #1@}
\makeatother

\makeatletter
\makeatother

\begin{document}
	
	\pagestyle{myheadings} \markright{\sc Covering a supermodular-like function in a mixed hypergraph\hfill} \thispagestyle{empty}
	
	\newtheorem{theorem}{Theorem}[section]
	\newtheorem{corollary}[theorem]{Corollary}
	\newtheorem{definition}[theorem]{Definition}
	\newtheorem{guess}[theorem]{Conjecture}
	\newtheorem{claim}[theorem]{Claim}
	\newtheorem{problem}[theorem]{Problem}
	\newtheorem{question}[theorem]{Question}
	\newtheorem{lemma}[theorem]{Lemma}
	\newtheorem{proposition}[theorem]{Proposition}
	\newtheorem{fact}[theorem]{Fact}
	\newtheorem{acknowledgement}[theorem]{Acknowledgement}
	\newtheorem{algorithm}[theorem]{Algorithm}
	\newtheorem{axiom}[theorem]{Axiom}
	\newtheorem{case}[theorem]{Case}
	\newtheorem{conclusion}[theorem]{Conclusion}
	\newtheorem{condition}[theorem]{Condition}
	\newtheorem{conjecture}[theorem]{Conjecture}
	\newtheorem{criterion}[theorem]{Criterion}
	\newtheorem{example}[theorem]{Example}
	\newtheorem{exercise}[theorem]{Exercise}
	\newtheorem{notation}[theorem]{Notation}
	\newtheorem{observation}[theorem]{Observation}
	\newtheorem{solution}[theorem]{Solution}
	\newtheorem{summary}[theorem]{Summary}
	
	\newtheorem{thm}[theorem]{Theorem}
	\newtheorem{prop}[theorem]{Proposition}
	\newtheorem{defn}[theorem]{Definition}

	\newtheorem{lem}[theorem]{Lemma}
	\newtheorem{con}[theorem]{Conjecture}
	\newtheorem{cor}[theorem]{Corollary}

	\newenvironment{proof}{\noindent {\bf
			Proof.}}{\rule{3mm}{3mm}\par\medskip}
	\newcommand{\remark}{\medskip\par\noindent {\bf Remark.~~}}
	\newcommand{\pp}{{\it p.}}
	\newcommand{\de}{\em}

	\newcommand{\g}{\mathrm{g}}

	\newcommand{\qf}{Q({\cal F},s)}
	\newcommand{\qff}{Q({\cal F}',s)}
	\newcommand{\qfff}{Q({\cal F}'',s)}
	\newcommand{\f}{{\cal F}}
	\newcommand{\ff}{{\cal F}'}
	\newcommand{\fff}{{\cal F}''}
	\newcommand{\fs}{{\cal F},s}
	\newcommand{\cs}{\chi'_s(G)}
	
	\newcommand{\G}{\Gamma}
	\newcommand{\wrt}{with respect to }
	\newcommand{\mad}{{\rm mad}}
	\newcommand{\col}{{\rm col}}
	\newcommand{\gcol}{{\rm gcol}}
	
	\newcommand*{\ch}{{\rm ch}}
	\newcommand*{\ra}{{\rm ran}}
	\newcommand{\co}{{\rm col}}
	\newcommand{\sco}{{\rm scol}}
	\newcommand{\wc}{{\rm wcol}}
	\newcommand{\dc}{{\rm dcol}}
	\newcommand*{\ar}{{\rm arb}}
	\newcommand*{\ma}{{\rm mad}}
	\newcommand{\di}{{\rm dist}}
	\newcommand{\tw}{{\rm tw}}
	\newcommand{\scol}{{\rm scol}}
	\newcommand{\wcol}{{\rm wcol}}
	\newcommand{\td}{{\rm td}}
	\newcommand{\edp}[2]{#1^{[\natural #2]}}
	\newcommand{\epp}[2]{#1^{\natural #2}}
	\newcommand*{\ind}{{\rm ind}}
	\newcommand{\red}[1]{\textcolor{red}{#1}}
	
	\def\C#1{|#1|}
	\def\E#1{|E(#1)|}
	\def\V#1{|V(#1)|}
	\def\iarb{\Upsilon}
	\def\ipac{\nu}
	\def\nul{\varnothing}

	\newcommand*{\QEDA}{\ensuremath{\blacksquare}}
	\newcommand*{\QEDB}{\hfill\ensuremath{\square}}

	
	
	
	
	\title{\Large\bf  Covering a supermodular-like function in a mixed hypergraph}
	
	\author{Hui Gao \\
		School of Mathematics\\
		China University of Mining and Technology\\
		Xuzhou, Jiangsu 221116, China\\
		E-mail: \texttt{gaoh1118@yeah.net}}

	\maketitle

\begin{abstract}
	In this paper, we solve a conjecture by Szigeti in [Matroid-rooted packing of arborescences, submitted], which characterizes a mixed hypergraph $\mathcal{F}=(V, \mathcal{E} \cup \mathcal{A})$ having an orientation $\overrightarrow{\mathcal{E}}$ of $\mathcal{E}$ such that $e_{\overrightarrow{\mathcal{E}} \cup \mathcal{A}} (\mathcal{P}) \geq \sum_{X \in \mathcal{P}}h(X) -b(\cup \mathcal{P})$ for every subpartition $\mathcal{P}$ of $V$, where $h$ is an integer-valued, intersecting supermodular function on $V$ and $b$ a submodular function on $V$. As a corollary, another conjecture in the same paper is confirmed, which characterizes a mixed hypergraph having a packing of mixed hyperarborescences such that their roots form a basis in a given matroid, each vertex $v$ belongs to exactly $k$ of them and is the root of at least $f(v)$ and at most $g(v)$ of them.
\end{abstract}

{\em Keywords: Matroid-rooted; k-regular; Mixed hyperarborescence; Mixed hypergraph}

{\em AMS subject classifications.  05B35, 05C40, 05C70}

\section{Introduction}
In this paper, all graphs, digraphs, hypergraphs or dypergraphs may have multiple edges, arcs, hyperedges or dyperedges respectively, but no loops.

A digraph $F$ is called an arborescence rooted at a vertex $r \in V(F)$ (or $r$-arborescence) if its underlying graph is a tree and for each $v \in V(F)$, there exists a directed path from $r$ to $v$. The vertex $r$ is called the root of $F$. Let $D=(V, A)$ be a digraph. A set of arc-disjoint subgraphs of $D$ is called a packing of subgraphs.
Packing arborescences in digraphs is a fundamental and well-studied problem in graph theory. The basic problem of packing of spanning arborescences with fixed roots is due to Edmonds~\cite{E-73}.

A multiset of vertices in $V$ may contain multiple occurrences of elements. For a multiset $S$ of vertices in $V$ and a subset $X$ of $V$ , $S_{X}$ denotes the multiset consisting of the elements of $X$ with the same multiplicities as in $S$.

\begin{thm}[\cite{E-73}]\label{Edmonds}
Let $D = (V, A)$ be a digraph and $S$ a multiset of vertices in $V$. There exists a packing of spanning $s$-arborescences $(s \in S)$ in $D$ if and only if
\begin{equation}
d^{-}_{A}(X) \geq |S_{V-X}|~~~\text{for every $\emptyset \neq X \subseteq V$.}
\end{equation}
\end{thm}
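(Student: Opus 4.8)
The plan is to treat the two implications separately, the forward one being routine and the converse carrying all the difficulty.

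\emph{Necessity.} Suppose the packing exists and fix a nonempty $X \subseteq V$. The arborescences whose root lies in $V - X$ number exactly $|S_{V-X}|$. Each such $s$-arborescence is spanning, so it contains a directed path from $s \in V-X$ to every vertex of $X$; in particular it uses at least one arc with head in $X$ and tail outside $X$, i.e.\ at least one arc entering $X$. Since distinct arborescences in a packing are arc-disjoint, these entering arcs are all distinct, whence $d^-_A(X) \geq |S_{V-X}|$.

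\emph{Sufficiency.} I would induct on $|S|$, the goal being to peel off one spanning arborescence at a time. Assuming the cut condition, fix a root $s \in S$ and try to extract a single spanning $s$-arborescence $T$ whose deletion leaves a feasible residual instance, i.e.\ such that for every nonempty $X$,
\[
d^-_{A \setminus A(T)}(X) \;\geq\; |(S - s)_{V-X}|,
\]
where $S-s$ denotes $S$ with one copy of $s$ removed. If such a $T$ exists, then deleting $A(T)$ and replacing $S$ by $S - s$ gives a smaller instance satisfying the hypothesis, and induction completes the packing. So everything reduces to producing one good spanning $s$-arborescence.

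To build $T$ I would grow it arc by arc: maintain a set $U \ni s$ spanned so far (initially $U = \{s\}$) together with the arcs chosen, and at each step add an arc $(u,v)$ with $u \in U$, $v \in V - U$, committing $v$ to the arborescence. The content of the argument is to show that at every stage an arc can be chosen so that the running residual deficiency never goes negative. Here the key structural facts are that the in-degree function $d^-_A$ is submodular while $X \mapsto |S_{V-X}| = |S| - |S_X|$ is modular, so that $X \mapsto d^-_A(X) - |S_{V-X}|$ is submodular and, by hypothesis, nonnegative. Consequently the \emph{tight} sets (those attaining equality) are closed under intersection and union: if $X, Y$ are tight with $X \cap Y \neq \emptyset$, then an uncrossing inequality forces both $X \cap Y$ and $X \cup Y$ to be tight. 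This lattice structure of the tight sets is what guarantees that a blocked extension on one tight set cannot simultaneously block all candidate arcs, so a safe arc always exists.

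The main obstacle, as usual for this theorem, is precisely this extension lemma: verifying, via the uncrossing of tight sets, that growing $T$ never gets stuck before $U = V$, and that the chosen arborescence respects the slack $d^-_A(X) - |S_{V-X}|$ on every set so that the residual cut condition survives its removal. An alternative that packages the uncrossing into black-box machinery is to derive the statement from the matroid union theorem, realizing each fixed-root spanning arborescence as a common base of a graphic matroid and an in-degree partition matroid and then combining the roots in $S$; I would fall back on this route if the direct exchange bookkeeping became unwieldy.
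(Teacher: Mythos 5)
The paper itself contains no proof of Theorem~\ref{Edmonds}: it is quoted from Edmonds \cite{E-73} as background, so your proposal has to stand on its own merits. Your necessity argument does stand: each spanning arborescence rooted outside $X$ must use a private arc entering $X$, and arc-disjointness makes these arcs distinct, giving $d^-_A(X)\geq |S_{V-X}|$. That half is complete and correct.

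The sufficiency half, however, is a plan rather than a proof, and the gap sits exactly where you flag ``the main obstacle.'' The inductive scheme (peel off one spanning $s$-arborescence whose removal preserves the residual cut condition, growing it greedily from $U=\{s\}$) is the standard Lov\'asz approach, and the uncrossing fact you invoke (intersecting tight sets have tight union and intersection) is true. But your assertion that this ``lattice structure of the tight sets\dots guarantees that a blocked extension on one tight set cannot simultaneously block all candidate arcs'' does not follow from closure under union and intersection alone, and the natural way to exploit that closure---passing to maximal tight sets---actually fails, because ``arc $uv$ enters $X$'' is not monotone under inclusion: an arc may enter a small tight set $Z$ while having both ends inside a larger tight set containing $Z$, so an arc avoiding every maximal tight set can still be blocked. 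What closes the argument is an extremal choice in the opposite direction plus two counting steps, none of which appear in your proposal: take $X$ an inclusionwise \emph{minimal} tight set with $X\cap(V-U)\neq\emptyset$ (if no tight set meets $V-U$, any arc from $U$ to $V-U$ works, and such an arc exists since $d^-_A(V-U)\geq |S_U|\geq 1$). Since $X\cap(V-U)$ is disjoint from $U$, no arc of the partial arborescence enters it, so its residual in-degree strictly exceeds the tight value of $X$; on the other hand, every arc entering $X\cap(V-U)$ from outside $X$ also enters $X$, and there are at most (tight value of $X$) of those. Hence some residual arc $uv$ runs from $X\cap U$ to $X\cap(V-U)$, and minimality makes it safe: a tight set $Z$ entered by $uv$ would force $X\cap Z$ (which contains $v$) to be a tight set meeting $V-U$ and contained in $X$, whence $X\cap Z=X$ by minimality, i.e.\ $u\in X\subseteq Z$, contradicting that $uv$ enters $Z$. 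Without this minimal-set choice and the in-degree counts, the greedy growth is not shown to avoid getting stuck, so the induction never completes. (Your fallback via matroid union is a legitimate alternative route---essentially Edmonds' original one---but it too is only named, not carried out.)
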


Denote by $\mathbb{Z}_{+}$ the set of nonnegative integers and let $k \in Z_{+}$. Theorem~\ref{Edmonds} implies a characterization of the existence of a packing of $k$-regular $s$-arborescences ($s \in S$) in $D$, where $k$-regular means that each vertex in $D$ must belong to exactly $k$ of the arborescences.

\begin{thm}[\cite{E-73}]
Let $D = (V, A)$ be a digraph, $k \in \mathbb{Z}_{+}$, $S$ a multiset of vertices in $V$. There exists a $k$-regular packing of $s$-arborescences ($s \in S$) in $D$ if and only if
\begin{equation}
k \geq |S_{v}|~~~\text{for every $v \in V$,}
\end{equation}
\begin{equation}
d_{A}^{-}(X) \geq k-|S_{X}|~~~\text{for every $\emptyset \neq X \subseteq V$.}
\end{equation}
\end{thm}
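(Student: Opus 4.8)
The plan is to treat the two directions separately, handling necessity by a direct counting argument and deriving sufficiency from Theorem~\ref{Edmonds} by adjoining a single auxiliary vertex.

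For necessity, suppose a $k$-regular packing $\{T_s : s \in S\}$ exists, so there are $|S|$ arc-disjoint arborescences, $T_s$ rooted at $s$, and every vertex lies in exactly $k$ of them. The $|S_v|$ arborescences rooted at $v$ all contain $v$, and $v$ lies in exactly $k$ arborescences in total, whence $k \ge |S_v|$; this is the first condition. For the second, fix $\emptyset \neq X \subseteq V$ and any $v \in X$. Of the exactly $k$ arborescences containing $v$, at most $|S_X|$ can be rooted inside $X$, so at least $k - |S_X|$ are rooted in $V - X$. Each of these reaches $v \in X$ from a root outside $X$ and therefore uses at least one arc entering $X$; since the packing is arc-disjoint these arcs are distinct, and hence $d^{-}_{A}(X) \ge k - |S_X|$.

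For sufficiency, assume both conditions; in particular, taking $X = V$ in the second gives $0 = d^{-}_{A}(V) \ge k - |S|$, so $|S| \ge k$ and the quantities $|S| - k$ and $k - |S_v|$ are nonnegative. I would build an enlarged digraph $D' = (V', A')$ on $V' = V \cup \{s_0\}$, where $A'$ consists of $A$, together with $|S| - k$ parallel arcs from $s_0$ to each $v \in V$, and $|S_v|$ parallel arcs from $v$ to $s_0$ for each $v \in V$ (the latter only to guarantee reachability of $s_0$ and to keep the in-degree conditions at sets containing $s_0$ tight). Keeping the same root multiset $S$, I would seek a packing of $|S|$ spanning arborescences of $D'$ and apply Theorem~\ref{Edmonds}, whose hypothesis here reads $d^{-}_{A'}(Y) \ge |S_{V' - Y}|$ for every $\emptyset \neq Y \subseteq V'$. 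For $Y \subseteq V$ one has $d^{-}_{A'}(Y) = d^{-}_{A}(Y) + |Y|(|S| - k)$ and $|S_{V' - Y}| = |S| - |S_Y|$, so the inequality follows from the second condition together with $(|Y| - 1)(|S| - k) \ge 0$; for $Y \ni s_0$ the added arcs into $s_0$ make it hold with room to spare. Thus Theorem~\ref{Edmonds} yields a packing of $|S|$ spanning arborescences of $D'$.

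It remains to read off a $k$-regular packing in $D$ by deleting $s_0$. Since $s_0$ is never a root and receives exactly $|S|$ arcs, the degree identity for spanning packings forces each of the $|S|$ arborescences to meet $s_0$ exactly once; deleting $s_0$ from the arborescence rooted at $s$ and keeping the component of $s$ yields an $s$-arborescence in $D$, and a vertex survives in it precisely when it is not separated from $s$ by $s_0$. The main obstacle is \emph{exactness}: a priori a vertex could remain covered more than $k$ times, or an entire subtree of original arcs could hang below $s_0$, so the restriction need not be $k$-regular on the nose. I would resolve this by choosing, among all spanning packings of $D'$, one that minimizes the total number of arcs of $A$ lying below $s_0$. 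The two claims to establish are (i) that this minimum is zero, so every vertex reached through $s_0$ is a leaf-child of $s_0$, and (ii) that all $|S| - k$ copies of each arc $s_0 v$ are then used; granting these, the degree identity makes each $v \in V$ use exactly $(|S| - |S_v|) - (|S| - k) = k - |S_v|$ arcs of $A$, so its coverage in $D$ is exactly $k$. Securing this normal form via an exchange argument — ruling out both unused auxiliary arcs and original arcs buried beneath $s_0$ — is where the real work lies, and is the step I expect to be most delicate.
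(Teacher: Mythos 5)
Your necessity argument is correct, and your verification of Edmonds' condition in $D'$ is also correct (note the paper itself states this theorem without proof, merely observing that it is implied by Theorem~\ref{Edmonds}, so the reduction-to-Edmonds strategy is the intended one). The problem is that the proof stops exactly where you admit "the real work lies": claims (i) and (ii) are never established, and they are not a routine normalization --- a packing of $D'$ satisfying (i) and (ii) is essentially \emph{equivalent} to the $k$-regular packing you are trying to build, so deferring them defers the whole theorem. Two concrete signs that the gap is substantive in your particular construction. First, the natural exchange (re-hang $v$ on an unused copy of $s_0v$ and discard its real parent arc $uv$) is not always legal in $D'$: your arborescences are rooted in $V$, not at $s_0$, so $s_0$ may be a \emph{descendant} of $v$, and then $T'-uv+s_0v$ contains a directed cycle through $s_0$ and $v$. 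Second, your check of Edmonds' condition in $D'$ uses only condition (2); hence Edmonds packings of $D'$ exist even when condition (1) fails, while the desired conclusion does not (e.g.\ $V=\{a,b\}$, $A=\{ab,ba\}$, $S=\{a,a\}$, $k=1$: your $D'$ has a packing of two spanning $a$-arborescences, but $s_0a$ can never be used since $a$ is a root, so claim (ii) is unachievable). So any proof of (i)--(ii) must invoke $k\geq|S_v|$ inside the exchange argument, and your sketch gives no indication of where or how.

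The gap disappears if you root the auxiliary packing \emph{at} $s_0$ rather than at $S$. Let $D^{+}=(V\cup\{s_0\},\,A\cup A_0)$, where $A_0$ consists of $|S_v|$ parallel arcs $s_0v$ for each $v\in V$, and apply Theorem~\ref{Edmonds} to the root multiset consisting of $k$ copies of $s_0$: for $\emptyset\neq X\subseteq V$ the hypothesis reads $d^{-}_{A}(X)+|S_X|\geq k$, which is exactly condition (2), and sets containing $s_0$ are trivial. Given $k$ arc-disjoint spanning $s_0$-arborescences $B_1,\dots,B_k$, the exchange is now always legal: if some copy of $s_0v$ is unused, then fewer than $|S_v|\leq k$ copies are used, so in some $B_i$ the parent arc of $v$ is a real arc $uv$; replacing $uv$ by $s_0v$ keeps $B_i$ a spanning $s_0$-arborescence (no cycle can arise, since $s_0$ is the root), and strictly increases the number of arcs of $A_0$ used. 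Condition (1) is precisely what rules out the alternative stopping state (all $k$ parents of $v$ auxiliary yet a copy of $s_0v$ unused would force $k\leq|S_v|$ used copies out of $|S_v|\leq k$ total, a contradiction), so the process ends with all $|S|$ arcs of $A_0$ used. Deleting $s_0$ then splits the $B_i$ into arborescences whose roots form exactly the multiset $S$, and each vertex lies in exactly one component of each $B_i-s_0$, hence in exactly $k$ arborescences. I recommend you rewrite the sufficiency proof along these lines; your current construction, with its oppositely-oriented auxiliary arcs and roots kept in $V$, is what makes the "delicate step" genuinely intractable rather than merely delicate.
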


Later Frank~\cite{F-78} solved the problem of packing of spanning arborescences with flexible roots. In fact, Frank~\cite{F-78} (and independently Cai~\cite{C-83}) provided a result on $(f, g)$-bounded packings of $k$ spanning arborescences where $f(v)$ is a lower bound and $g(v)$ is an upper bound on the number of $v$-arborescences in the packing for every vertex $v$.

A set of disjoint subsets of $V$ is called a subpartition of $V$. For a subpartition $\mathcal{P}$ of $V$, we denote by $\cup \mathcal{P}$ the vertex set which is the union of the members of $\mathcal{P}$. Denote by $\mathbb{Z}^{V}_{+}$ the set of nonnegative integer valued function on $V$.

\begin{thm}[\cite{C-83,F-78}]
Let $D = (V, A)$ be a digraph, $f, g \in \mathbb{Z}_{+}^{V}$, and $k \in \mathbb{Z}_{+}$. There exists an $(f, g)$-bounded packing of $k$ spanning arborescences in $D$ if and only if
\begin{equation}
g(v) \geq f(v)~~~\text{for every $v \in V$,}
\end{equation}
\begin{equation}
e_{A}(\mathcal{P}) \geq k|\mathcal{P}|-\min\{k-f(V \setminus \cup \mathcal{P}), g(\cup \mathcal{P}) \}~~~\text{for every subpartition $\mathcal{P}$ of $V$.}
\end{equation}
\end{thm}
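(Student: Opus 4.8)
The plan is to reduce the existence of the packing to the existence of a feasible \emph{root distribution} by way of Edmonds' theorem (Theorem~\ref{Edmonds}), and then to recognise the search for such a distribution as the problem of covering an intersecting supermodular function by a vector confined to a box with prescribed total --- precisely the theme of this paper.

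Necessity of the first condition is clear, since a vertex cannot be the root of at least $f(v)$ and at most $g(v)$ members unless $f(v)\le g(v)$. For the second, fix a feasible packing, write $r_v$ for the number of its arborescences rooted at $v$, and let $\mathcal{P}=\{X_1,\dots,X_t\}$ be a subpartition. Each arborescence rooted outside $X_i$ contributes a distinct arc to $d^{-}_{A}(X_i)$; summing over the $k$ (arc-disjoint) arborescences and over $i$, and using that the parts are disjoint so each root lies in at most one part, gives $e_{A}(\mathcal{P})=\sum_i d^{-}_{A}(X_i)\ge k|\mathcal{P}|-r(\cup\mathcal{P})$. Since $r(\cup\mathcal{P})\le g(\cup\mathcal{P})$ and $r(\cup\mathcal{P})=k-r(V\setminus\cup\mathcal{P})\le k-f(V\setminus\cup\mathcal{P})$, the stated bound follows.

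For sufficiency I would first reformulate the target. Taking $S$ to be the multiset in which $v$ occurs $r_v$ times, Theorem~\ref{Edmonds} shows that a feasible packing exists if and only if there is an integer vector $r$ with $f\le r\le g$, $r(V)=k$, and $r(X)\ge p(X):=k-d^{-}_{A}(X)$ for every nonempty $X\subseteq V$. Since $d^{-}_{A}$ is submodular, $p$ is integer-valued and intersecting supermodular, and a short rearrangement turns the second condition into $\sum_i p(X_i)\le\min\{\,k-f(V\setminus\cup\mathcal{P}),\,g(\cup\mathcal{P})\,\}$ for every subpartition $\mathcal{P}$. These are exactly the two unavoidable obstructions to covering $p$: $\sum_i p(X_i)\le r(\cup\mathcal{P})\le g(\cup\mathcal{P})$ and $\sum_i p(X_i)\le r(\cup\mathcal{P})=k-r(V\setminus\cup\mathcal{P})\le k-f(V\setminus\cup\mathcal{P})$.

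The core of the argument is that these obstructions are also sufficient. Specialising the condition to $\mathcal{P}=\{X\}$ and to the empty subpartition gives $g(X)\ge p(X)$, $k\le g(V)$ and $f(V)\le k$, so $r:=g$ is a valid starting vector with $r(V)\ge k$. I would then repeatedly lower a single coordinate $r_v$ with $r_v>f(v)$ lying in no \emph{tight} set (a nonempty $X$ with $r(X)=p(X)$), until $r(V)=k$. Intersecting supermodularity of $p$ together with modularity of $r$ makes the tight sets closed under intersection and under union of intersecting pairs, so the maximal tight sets are pairwise disjoint and form a subpartition $\mathcal{P}$. If the process stalls while $r(V)>k$, every coordinate above its lower bound sits in $\cup\mathcal{P}$, whence $r(V)=\sum_i p(X_i)+f(V\setminus\cup\mathcal{P})>k$, contradicting the second obstruction. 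Integrality is automatic, as all decrements are by $1$ and $f,g,p$ are integral. I expect the uncrossing/stall analysis --- verifying that the maximal tight sets genuinely form a subpartition and that a stall exhibits a violated inequality --- to be the main obstacle; the subpartition form of the condition, rather than an all-subsets form, is a direct reflection of $p$ being only \emph{intersecting} supermodular.
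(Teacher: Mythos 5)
Your proposal is correct, but there is nothing in the paper to compare it against: the statement is quoted there purely as background (attributed to Cai and Frank) and no proof of it appears in the paper. Judged on its own, your argument is complete. The reduction via Edmonds' theorem is valid: since an arc has a single head, each arc enters at most one member of a subpartition, so $e_{A}(\mathcal{P})=\sum_{X\in\mathcal{P}}d^{-}_{A}(X)$, and a feasible packing exists if and only if there is an integral $r$ with $f\le r\le g$, $r(V)=k$, and $r(X)\ge p(X):=k-d^{-}_{A}(X)$ for every nonempty $X$. Your initialization $r:=g$ is justified by the instances $\mathcal{P}=\{X\}$ (including $X=V$, which gives $g(V)\ge k$) and $\mathcal{P}=\emptyset$ (which gives $f(V)\le k$) of the hypothesis; the unit decrement preserves all constraints by integrality; tight sets uncross because $p$ is intersecting supermodular while $r$ is modular, so the maximal tight sets form a subpartition $\mathcal{P}$; and a stall with $r(V)>k$ yields $\sum_{X\in\mathcal{P}}p(X)=r(\cup\mathcal{P})=r(V)-f(V\setminus\cup\mathcal{P})>k-f(V\setminus\cup\mathcal{P})$, violating the hypothesis, the degenerate stall with no tight set at all being exactly the case $\mathcal{P}=\emptyset$, i.e.\ $f(V)>k$. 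This is essentially the classical uncrossing proof in the style of Frank, and it resonates with the paper's subject: the problem you isolate, covering an intersecting supermodular function by a box-constrained integral vector of prescribed total, is precisely the kind of subpartition condition (a $g$-term and a $k-f$ term) that recurs in the quoted theorems of Szigeti and in Conjecture~\ref{Mfgkmixhyper}. One stylistic remark: your closing sentence hedges that the uncrossing/stall analysis is ``the main obstacle,'' but the preceding sentences already carry that analysis out, so no gap remains; in a final write-up simply present it as done.
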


Durand de Gevigney, Nguyen and Szigeti~\cite{DNS-13} considered the problem of matroid-based packing of arborescences. In this problem here are given a digraph $D = (V, A)$, a multiset $S$ of vertices in $V$ and a matroid $M$ on $S$, and what is wanted is a packing $\mathcal{B}$ of (not necessarily spanning) arborescences such that for every $v \in V$ , the set of roots of the arborescences in $\mathcal{B}$ that contain $v$ must form a basis of $M$. They gave in~\cite{DNS-13} a characterization of the existence of a matroid-based packing of arborescences.

\begin{thm}[\cite{DNS-13}]
Let $D = (V, A)$ be a digraph, $S$ a multiset of vertices in $V$ , and $M = (S, r_{M})$ a matroid. There exists an $M$-based packing of arborescences in $D$ if and only if
\begin{equation}
d^{-}_{A}(X) \geq r_{M}(S)-r_{M}(S_{X})~~~\text{for every $\emptyset \neq X \subseteq V$.}
\end{equation}
\end{thm}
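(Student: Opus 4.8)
The statement is an ``if and only if,'' so the plan is to treat the two directions separately, with necessity being routine and sufficiency carrying all the weight.

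For necessity, suppose an $M$-based packing $\mathcal{B}$ exists and fix a nonempty $X \subseteq V$. I would choose any vertex $v \in X$ and look at the sub-collection $\mathcal{B}_v \subseteq \mathcal{B}$ of arborescences containing $v$; by hypothesis their roots form a basis $B$ of $M$, so $|B| = r_M(S)$. The roots lying in $X$ are elements of $S_X$ and, being a subset of the independent set $B$, number at most $r_M(S_X)$; hence at least $r_M(S) - r_M(S_X)$ arborescences of $\mathcal{B}_v$ are rooted in $V \setminus X$. Each such arborescence contains a directed path from a root outside $X$ to $v \in X$, so it uses at least one arc entering $X$, and since the arborescences are arc-disjoint these arcs are distinct. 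This yields $d^-_A(X) \geq r_M(S) - r_M(S_X)$.

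For sufficiency I would first record the structural facts that drive the argument. Writing $p(X) := r_M(S) - r_M(S_X)$ and using that matroid rank is submodular together with $S_X \cup S_Y = S_{X \cup Y}$ and $S_X \cap S_Y = S_{X \cap Y}$, the function $p$ is fully supermodular, while $d^-_A$ is submodular; consequently the slack $d^-_A - p$ is submodular, so the family of tight sets (those with $d^-_A(X) = p(X)$) is closed under union and intersection. I would also note the extreme case: when $M$ is the free matroid on $S$ one has $p(X) = |S_{V \setminus X}|$ and the required object is exactly a packing of spanning arborescences, so this case is precisely Theorem~\ref{Edmonds}; this is the template the general argument must reproduce. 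I would then prove sufficiency by induction, building the packing while maintaining two invariants for the partial arborescences: arc-disjointness, and the property that at every vertex the roots already reaching it form an independent set of $M$ still extendable to a basis compatibly with the remaining digraph. The base case $A = \emptyset$ reduces to the singleton condition $r_M(S_v) = r_M(S)$, which says exactly that each $v$ already has, among the copies $S_v$ of itself, a set of roots spanning $M$ and hence containing a basis; these can be chosen independently across vertices because single-vertex arborescences do not interact. For the inductive step, if some arc can be removed without destroying the covering inequality I would delete it and recurse, since a packing in the smaller digraph is still valid in $D$; otherwise every arc is critical, i.e.\ enters a tight set, and I would use the uncrossing of tight sets to locate an arc that can be committed to growing one arborescence, choosing which root that arc serves by a matroid augmentation so as to keep every vertex's root set independent and on track to a basis.

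The main obstacle is precisely this last step. In the all-arcs-critical case one must simultaneously honor the global matroid-basis requirement at every vertex and the arc-disjoint in-degree covering, and the delicate point is to show that the uncrossing structure of the tight sets always leaves room for a matroid exchange that advances the construction without violating the inequality for any set. Verifying that such an extendable arc always exists, and that committing it preserves both invariants, is where the genuine content lies; the free-matroid reduction to Theorem~\ref{Edmonds} is a useful sanity check but also a warning, since the matroid here forces a strictly stronger condition than the purely degree-based ($k$-regular) one, so the bookkeeping cannot be carried out by in-degrees alone.
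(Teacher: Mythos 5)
First, a remark on the ground truth: the paper you were given never proves this statement at all --- it is quoted as background (its Theorem 1.4) directly from \cite{DNS-13} --- so the comparison below is with the original proof of Durand de Gevigney, Nguyen and Szigeti, not with anything in this paper. Your necessity argument is correct and complete: for $v\in X$ the roots of the arborescences through $v$ form a basis $B$ with $|B|=r_M(S)$, the roots lying in $X$ form an independent subset of $S_X$ and hence number at most $r_M(S_X)$, and each of the remaining at least $r_M(S)-r_M(S_X)$ arborescences contributes an arc entering $X$, these arcs being distinct by arc-disjointness.

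The sufficiency direction, however, contains a genuine gap, and you name it yourself: in the case where every arc enters a tight set you must ``locate an arc that can be committed \dots{} by a matroid augmentation so as to keep every vertex's root set independent and on track to a basis,'' and you then concede that verifying this ``is where the genuine content lies.'' That step is not a detail to be checked later --- it \emph{is} the theorem; what you have written is a plan, not a proof. Two concrete problems. First, your induction hypothesis is never actually stated: ``extendable to a basis compatibly with the remaining digraph'' is not a well-defined property, and once some arcs have been committed to partial arborescences the residual problem is no longer an instance of the theorem (it is not captured by a condition of the form $d^-_A(X)\geq r_M(S)-r_M(S_X)$), so you have nothing to recurse on. This is precisely why the Lov\'asz-style arc-by-arc growth that proves Edmonds' theorem (Theorem~\ref{Edmonds}) does not transfer to the matroid setting: in Edmonds' setting the residual requirement after growing a partial arborescence is again a cut condition of the same type, whereas here it is not. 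Second, the actual proof in \cite{DNS-13} takes a route your plan does not anticipate: one considers a packing that is merely $M$-\emph{independent} (every vertex's root set independent in $M$) and maximizes the total number of covered vertices, and shows that if some vertex's root set fails to be a basis, then a matroid exchange step combined with the maximality of the packing yields a set $X$ violating $d^-_A(X)\geq r_M(S)-r_M(S_X)$. Maximality replaces induction, and no arc is ever ``committed'' ahead of time. Your submodularity and uncrossing observations and the free-matroid sanity check are fine (modulo the caveat that tight sets uncross only when they intersect, since the empty set need not satisfy the inequality), but they do not close this gap.
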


The above problems were generalized for mixed graphs by Frank~\cite{F-78}, Gao,Yang~\cite{GY21}, and Fortier et al.~\cite{FKLST-18}, also for directed hypergraphs by Frank, Kir\'{a}ly, Kir\'{a}ly~\cite{FKK-03}, H$\ddot{\rm{o}}$rsch, Szigeti~\cite{HS-21}, and~\cite{FKLST-18}, and even for mixed hypergraphs in~\cite{FKK-03,HS-21}, respectively

Szigeti~\cite{S-24} introduced a new problem on packing of arborescences with a new matroid constraint. Given a digraph $D = (V, A)$, a multiset S of vertices in $V$ and a matroid $M$ on $S$, a packing $\mathcal{B}$ of (not necessarily spanning) arborescences is called $M$-rooted if the set of roots of the arborescences in $\mathcal{B}$ is a basis of $M$. Note that if each arborescence in $\mathcal{B}$ is spanning then the condition of $M$-based packing coincides with the condition of $M$-rooted packing.

Let $\mathcal{D} = (V, \mathcal{A})$ be a dypergraph, where $V$ is the set of vertices and $\mathcal{A}$ is the set of dyperedges of $\mathcal{D}$. A dyperedge is an ordered pair $(Z, z)$ such that $z$ is a vertex in $V$, called the head, and $Z$ is a non-empty subset of $V -z$, called the set of tails. For $X \subseteq V$, we say that a dyperedge $(Z, z) \in \mathcal{A}$ enters $X$ if $z \in X$ and $Z \setminus X \neq \emptyset$.  The operation that replaces a dyperedge $(Z, z)$ by an arc $yz$ where $y \in Z$ is called trimming. We say that $\mathcal{D}$ is an $s$-hyperarborescence, if $\mathcal{D}$ can be trimmed to an $s$-arborescence. We say that $\mathcal{D}$ has an $M$-rooted/$(f, g)$-bounded/$k$-regular packing of hyperarborescences if $\mathcal{D}$ can be trimmed to a digraph that has an $M$-rooted/$(f, g)$-bounded/$k$-regular packing of arborescences.

Let $\mathcal{F} = (V, \mathcal{E} \cup \mathcal{A})$ be a mixed hypergraph, where $V$ is the set of vertices, $\mathcal{E}$ is the set of hyperedges and $\mathcal{A}$ is the set of dyperedges of $\mathcal{F}$. A hyperedge is a subset of $V$ of size at least two. A hyperedge $X$ enters a subset $Y$ of $V$ if $X \cap Y \neq \emptyset \neq X \setminus Y$. By orienting a hyperedge $X$, we mean the operation that replaces the hyperedge $X$ by a dyperedge $(X -x, x)$ for some $x \in X$. A mixed hypergraph that has an orientation that is an $s$-hyperarborescence is called a mixed $s$-hyperarborescence. By a packing of mixed subhypergraphs in $\mathcal{F}$ we mean a set of mixed subhypergraphs that are hyperedge- and dyperedge-disjoint. For a subpartition $\mathcal{P}$ of subsets of $V$ , we denote by $e_{\mathcal{E} \cup \mathcal{A}}(\mathcal{P})$ the number of hyperedges in $\mathcal{E}$ and dyperedges in $\mathcal{A}$ that enter some member of $\mathcal{P}$.

Actually Szigeti~\cite{S-24} solved the problem of $M$-rooted $(f, g)$-bounded $k$-regular packing of hyperarborescences in dypergraphs by using the theory of generalized polymatroids. Denote $g_{k}=\min \{g, k\}$.

\begin{thm}[\cite{S-24}]
Let $\mathcal{D} = (V, \mathcal{A})$ be a dypergraph, $k \in \mathbb{Z}_{+}$, $f, g \in \mathbb{Z}_{+}^{V}$, $S$ a multiset of vertices in $V$ , and $M = (S, r_{M})$ a matroid. There exists an $M$-rooted $(f, g)$-bounded $k$-regular packing of hyperarborescences in $\mathcal{D}$ if and only if for all $U, W \subseteq V$ and all subpartitions $\mathcal{P}$ of $W$,
\begin{equation}\label{inq-gf}
g_{k}(v) \geq f(v)~~~\text{for every $v \in V$,}
\end{equation}
\begin{equation}\label{inq-gkrm}
r_{M}(S_{U})+g_{k}(V-U) \geq r_{M}(S),
\end{equation}
\begin{equation}
e_{\mathcal{A}}(\mathcal{P})+r_{M}(S_{U})+g_{k}(W-U) \geq k|\mathcal{P}|+f(U-W).
\end{equation}
\end{thm}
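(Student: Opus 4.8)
\section*{Proof proposal}

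The plan is to establish the two implications separately: necessity by direct counting, and sufficiency by reducing the existence of the packing to the non-emptiness of the intersection of two generalized polymatroids. The organizing object is the \emph{root vector} $m \in \mathbb{Z}_+^V$ of a packing, where $m(v)$ denotes the number of arborescences rooted at $v$; a packing is then recovered from a suitable $m$ together with a trimming of $\mathcal{A}$. Throughout, $R$ denotes the root multiset, which for an $M$-rooted packing is a basis of $M$, so that $|R|=r_{M}(S)$ and $|R_{U}|=m(U)$ for every $U\subseteq V$.

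\textbf{Necessity.} Assume such a packing exists. Since each vertex lies in exactly $k$ arborescences and is a root of between $f(v)$ and $g(v)$ of them, $f(v)\le m(v)\le \min\{g(v),k\}=g_{k}(v)$, which is~\eqref{inq-gf}. For~\eqref{inq-gkrm}, the sub-multiset $R_{U}$ is independent in $M$, so $m(U)=|R_{U}|\le r_{M}(S_{U})$, while $m(V-U)\le g_{k}(V-U)$; adding these and using $|R|=r_{M}(S)$ gives the bound. For the third inequality I combine three estimates. First, a trimmed arc $yz$ enters a member $X\in\mathcal{P}$ only if the originating dyperedge $(Z,z)$ already enters $X$, and since the members of $\mathcal{P}$ are disjoint each trimmed arc enters at most one of them, whence $e_{\mathcal{A}}(\mathcal{P})\ge \sum_{X\in\mathcal{P}} d^{-}_{A}(X)$. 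Second, $k$-regularity forces $d^{-}_{A}(X)\ge k-m(X)$: fixing a vertex of $X$, at least $k-m(X)$ of the $k$ arborescences through it are rooted outside $X$ and so enter $X$ by arc-disjoint paths. Third, the roots counted in $U$ split into those inside $\cup\mathcal{P}$ and those in $U-W$ (the latter numbering at least $f(U-W)$, as $\cup\mathcal{P}\subseteq W$), so $r_{M}(S_{U})\ge m(U)\ge m(U\cap\cup\mathcal{P})+f(U-W)$. Combining $r_{M}(S_{U})+g_{k}(W-U)\ge m(\cup\mathcal{P})+f(U-W)$ with $e_{\mathcal{A}}(\mathcal{P})\ge k|\mathcal{P}|-m(\cup\mathcal{P})$ yields the third inequality.

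\textbf{Sufficiency.} Here the idea is to show that the three numerical conditions are exactly the feasibility conditions for an integral root vector $m$ meeting all structural requirements, and then to realize such an $m$ by a packing. I would first record a trimming criterion in the spirit of the hyperarborescence packing theorem: for a fixed $m$ with $m\le k$, the dypergraph $\mathcal{A}$ can be trimmed to a $k$-regular packing of arborescences with root vector $m$ if and only if $m(\cup\mathcal{P})\ge k|\mathcal{P}|-e_{\mathcal{A}}(\mathcal{P})$ for every subpartition $\mathcal{P}$. Taking the maximum of the right-hand side over subpartitions with a prescribed union defines an intersecting supermodular demand $p$, so the admissible $m$ are those lying above $p$, inside the box $[f,g_{k}]$, and simultaneously forming a basis of $M$ (equivalently, $m$ lies in the base polyhedron determined by $U\mapsto r_{M}(S_{U})$ with $m(V)=r_{M}(S)$). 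The box-restricted matroid base polyhedron is a g-polymatroid and the supermodular covering region is the dual object, so I would invoke the g-polymatroid intersection theorem: the two regions meet in an integral point precisely when a family of cross-inequalities holds. Checking that these cross-inequalities collapse to~\eqref{inq-gf}, \eqref{inq-gkrm}, and the third inequality is the crux, with the set $U$ arising from the matroid side, the subpartition $\mathcal{P}$ of $W$ from the covering side, and $W$ serving as the interface between them.

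The hardest step will be the translation of the abstract intersection condition into the explicit third inequality. This has two delicate components: proving that the trimming demand $p$ is intersecting supermodular (an uncrossing argument on subpartitions, using submodularity of $X\mapsto e_{\mathcal{A}}(\{X\})$), and showing that the optimum in the cross-inequality is attained at a laminar configuration, which is exactly what converts a single bounding set into a subpartition $\mathcal{P}$ and produces the auxiliary sets $U$ and $W$ with the correct $f(U-W)$ correction term. By contrast, the matroid side and the box constraints are standard, so I expect essentially all of the technical difficulty to lie in controlling the interaction between the matroid rank $r_{M}(S_{U})$ and the supermodular trimming demand $p$.
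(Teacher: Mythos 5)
The first thing to note is that the paper against which your proposal is being compared does not actually prove this theorem: it is quoted as background from Szigeti's submitted paper \cite{S-24}, and the only indication of its proof given here is the remark that Szigeti established it ``by using the theory of generalized polymatroids.'' So there is no detailed in-paper argument to compare with; at the level of that one-line description, your plan --- encode a packing by its root vector $m$, restrict the matroid base polyhedron by the box $[f,g_{k}]$, describe the trimmable root vectors by a supermodular demand function, and apply g-polymatroid intersection --- is exactly the advertised route. Moreover, your necessity argument is complete and correct: the chains $e_{\mathcal{A}}(\mathcal{P}) \geq \sum_{X\in\mathcal{P}} d^{-}(X) \geq k|\mathcal{P}| - m(\cup\mathcal{P})$ and $r_{M}(S_{U})+g_{k}(W-U) \geq m(U\cap \cup\mathcal{P}) + m(\cup\mathcal{P}\setminus U) + f(U-W)$ do combine to give the third inequality, and (\ref{inq-gf}), (\ref{inq-gkrm}) are immediate.

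The sufficiency direction, however, is a scaffold rather than a proof, and you concede as much (``Checking that these cross-inequalities collapse to \dots\ is the crux''). Three load-bearing claims are asserted without proof: (a) the trimming criterion, i.e.\ that for $m\leq k$ the dypergraph can be trimmed to a $k$-regular packing of arborescences with root vector $m$ if and only if $m(\cup\mathcal{P}) \geq k|\mathcal{P}| - e_{\mathcal{A}}(\mathcal{P})$ for every subpartition $\mathcal{P}$ --- this requires a $k$-regular Edmonds-type theorem for dypergraphs (deducible from results in \cite{FKLST-18} or \cite{FKK-03}, e.g.\ via a super-root construction, but it must be stated and invoked precisely); (b) that the resulting demand function $p$ is intersecting supermodular, so that the admissible root vectors genuinely form a g-polymatroid-type region to which intersection theory applies; and (c) the translation of Frank's non-emptiness condition for the intersection of the two integral g-polymatroids into precisely the inequality family indexed by $U$, $W$, and subpartitions $\mathcal{P}$ of $W$, including the correction term $f(U-W)$ --- this is where essentially all of the content of the theorem sits, and nothing in the proposal carries it out. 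Until (a)--(c) are executed, you have proved one direction and outlined, but not closed, the other.
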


Furthermore, Szigeti obtain a characterization of the existence of packing of matroid-rooted $(f, g)$-bounded $k$-regular mixed arborescences (Theorem~\ref{Mfgkmixgraph}) by an orientation theorem (Theorem~\ref{orient-graph}).

\begin{thm}[\cite{S-24}]\label{Mfgkmixgraph}
Let $F = (V, E \cup A)$ be a mixed graph, $k \in \mathbb{Z}_{+}$, $f, g \in \mathbb{Z}_{+}^{V}$, $S$ a multiset of vertices in $V$, and $M = (S, r_{M})$ a matroid. There exists an $M$-rooted $(f, g)$-bounded $k$-regular packing of mixed arborescences in $F$ if and only if (\ref{inq-gf}) and (\ref{inq-gkrm}) hold and for all $W, U \subseteq V$ and all subpartitions $\mathcal{P}$ of $W$,
\begin{equation}
e_{E \cup A}(\mathcal{P}) + r_{M}(S_{U} ) + g_{k}(W - U) \geq  k|\mathcal{P}| + f(U - W).
\end{equation}
\end{thm}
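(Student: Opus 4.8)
The plan is to reduce the existence of the packing to an orientation problem, to package the matroid data together with the lower and upper bounds into an intersecting supermodular function $h$ and a submodular function $b$, and then to invoke the graph orientation theorem (Theorem~\ref{orient-graph}). Directly from the definitions, $F$ admits an $M$-rooted $(f,g)$-bounded $k$-regular packing of mixed arborescences if and only if $E$ has an orientation $\overrightarrow{E}$ for which the digraph $\overrightarrow{F}=(V,\overrightarrow{E}\cup A)$ admits such a packing of ordinary arborescences. Since a digraph is a dypergraph in which every dyperedge has a single tail, I would apply the dypergraph packing theorem stated above to $\overrightarrow{F}$: the packing exists in $\overrightarrow{F}$ exactly when (\ref{inq-gf}) and (\ref{inq-gkrm}) hold, together with
\[
e_{\overrightarrow{E}\cup A}(\mathcal{P})+r_{M}(S_{U})+g_{k}(W-U)\geq k|\mathcal{P}|+f(U-W)
\]
for all $U,W\subseteq V$ and all subpartitions $\mathcal{P}$ of $W$. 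As (\ref{inq-gf}) and (\ref{inq-gkrm}) do not involve the orientation, the task reduces to showing that an orientation $\overrightarrow{E}$ satisfying the displayed inequality exists if and only if the asserted inequality holds with the undirected count $e_{E\cup A}(\mathcal{P})$.

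Next I would cast the orientation-dependent inequality as a covering condition. Taking $h(X)=k$ for every nonempty $X$ (a modular, hence intersecting supermodular, function with $\sum_{X\in\mathcal{P}}h(X)=k|\mathcal{P}|$), and noting that for fixed $\mathcal{P}$ and $U$ the two sides are monotone in $W$, so that the inequality is tightest at $W=\cup\mathcal{P}$, the existence of a good orientation becomes the existence of $\overrightarrow{E}$ with $e_{\overrightarrow{E}\cup A}(\mathcal{P})\geq\sum_{X\in\mathcal{P}}h(X)-b(\cup\mathcal{P})$ for every subpartition $\mathcal{P}$, where
\[
b(Y)=\min_{U\subseteq V}\bigl(r_{M}(S_{U})+g_{k}(Y-U)-f(U-Y)\bigr).
\]

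The heart of the argument — and the step I expect to be the main obstacle — is to prove that $b$ is submodular, so that Theorem~\ref{orient-graph} applies. I would establish this by showing that $\gamma(U,Y)=r_{M}(S_{U})+g_{k}(Y-U)-f(U-Y)$ is submodular on the product lattice $2^{V}\times 2^{V}$ and then using that partial minimization over one coordinate preserves submodularity. The term $r_{M}(S_{U})$ is submodular in $U$ (matroid rank, with $S_{U_{1}\cup U_{2}}=S_{U_{1}}\cup S_{U_{2}}$ and $S_{U_{1}\cap U_{2}}=S_{U_{1}}\cap S_{U_{2}}$). The delicate point is that neither $g_{k}(Y-U)$ nor $f(U-Y)$ has, by itself, the right modularity in $(U,Y)$; only their combination works. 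Examining one vertex $v$ at a time with $p=\mathbf{1}[v\in U]$, $q=\mathbf{1}[v\in Y]$, the contribution $g_{k}(v)\,q(1-p)-f(v)\,p(1-q)$ has mixed second difference $f(v)-g_{k}(v)$, which is $\le 0$ precisely by hypothesis (\ref{inq-gf}). This is exactly where $f\le g_{k}$ is used, and it yields joint submodularity of $\gamma$ and hence submodularity of $b$.

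Finally, with $h$ intersecting supermodular and $b$ submodular, I would apply Theorem~\ref{orient-graph} to $F$: an orientation $\overrightarrow{E}$ with $e_{\overrightarrow{E}\cup A}(\mathcal{P})\geq\sum_{X\in\mathcal{P}}h(X)-b(\cup\mathcal{P})$ for all $\mathcal{P}$ exists if and only if the same inequality holds with $e_{E\cup A}(\mathcal{P})$ in place of $e_{\overrightarrow{E}\cup A}(\mathcal{P})$. Unfolding the definitions of $h$ and $b$ and reintroducing a general $W\supseteq\cup\mathcal{P}$ (legitimate by the monotonicity noted above) turns this undirected covering inequality back into $e_{E\cup A}(\mathcal{P})+r_{M}(S_{U})+g_{k}(W-U)\geq k|\mathcal{P}|+f(U-W)$ for all $W,U\subseteq V$ and subpartitions $\mathcal{P}$ of $W$. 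Combined with (\ref{inq-gf}) and (\ref{inq-gkrm}), this is exactly the asserted characterization.
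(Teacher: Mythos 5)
Your proposal is correct, but note that this paper does not prove Theorem~\ref{Mfgkmixgraph} at all: it is quoted from~\cite{S-24}, with the remark that Szigeti obtains it from the dypergraph packing theorem together with the orientation theorem (Theorem~\ref{orient-graph}), which is exactly the route you take. Your reduction --- passing to an orientation, collapsing $W$ to $\cup\mathcal{P}$ by monotonicity, setting $h\equiv k$ and $b(Y)=\min_{U\subseteq V}\bigl(r_{M}(S_{U})+g_{k}(Y-U)-f(U-Y)\bigr)$, and proving $b$ submodular via joint submodularity of $\gamma(U,Y)$ (where the per-vertex mixed second difference $f(v)-g_{k}(v)\le 0$ is exactly condition~(\ref{inq-gf})) followed by partial minimization --- is a correct and complete instantiation of that strategy, so it matches the approach the paper attributes to the source rather than diverging from it.
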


We say that two subsets $X ,Y$ of $V$ are intersecting if $X \cap Y \neq \emptyset$. A set function $h$ on $V$ is intersecting supermodular if for all intersecting $X, Y \subseteq V$,
\begin{equation}
h(X)+h(Y) \leq h(X \cup Y)+h(X \cap Y).
\end{equation}
A set function $b$ on $V$ is submodular if for all $X, Y \subseteq V$,
\begin{equation}
b(X)+b(Y) \geq b(X \cup Y)+b(X \cap Y).
\end{equation}

\begin{thm}[\cite{S-24}]\label{orient-graph}
Let $F = (V, E \cup A)$ be a mixed graph, $h$ an integer-valued intersecting supermodular function on $V$, and $b$ an integer-valued submodular function on $V$. There exists an
orientation $\overrightarrow{E}$ of $E$ such that
\begin{equation}
e_{\overrightarrow{E} \cup A}(\mathcal{P}) \geq \sum_{X \in \mathcal{P}}h(X) - b(\cup \mathcal{P})~~~\text{for every subpartition $\mathcal{P}$ of $V$}
\end{equation}
if and only if
\begin{equation}
e_{E \cup A}(\mathcal{P}) \geq \sum_{X \in \mathcal{P}}h(X) - b(\cup \mathcal{P})~~~\text{for every subpartition $\mathcal{P}$ of $V$}
\end{equation}
\end{thm}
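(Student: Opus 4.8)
The plan is to treat the two directions separately; necessity is immediate and sufficiency carries all the weight. For the ``only if'' direction I would argue member by member. Writing $e_{E\cup A}(\mathcal P)$ as the sum, over all members $f$ of $E\cup A$, of the indicator that $f$ enters some member of $\mathcal P$, it suffices to check that orienting one edge $\{x,y\}$ into an arc never raises this indicator for a fixed $\mathcal P$: the undirected edge enters a member as soon as one endpoint lies in it, whereas the resulting arc enters a member only when its \emph{head} lies in it. Hence $e_{\overrightarrow E\cup A}(\mathcal P)\le e_{E\cup A}(\mathcal P)$ for every subpartition $\mathcal P$, so the inequality for the oriented graph forces the same inequality for $F$.

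For the ``if'' direction I would induct on $|E|$. When $E=\emptyset$ the statement is exactly the hypothesis, so it suffices to orient one edge $e=\{u,v\}\in E$ while keeping the hypothesis intact for the resulting mixed graph $F_{\vec e}=(V,(E-e)\cup(A+\vec e))$ with the same $h$ and $b$; the inductive hypothesis applied to $F_{\vec e}$ then finishes. Set $\phi(\mathcal P)=e_{E\cup A}(\mathcal P)+b(\cup\mathcal P)-\sum_{X\in\mathcal P}h(X)$, so the hypothesis reads $\phi\ge 0$, and call $\mathcal P$ \emph{tight} if $\phi(\mathcal P)=0$. Since $e_{E\cup A}(\cdot)$ is a sum of per-member entry indicators, a short case check shows that orienting $e$ as $u\to v$ lowers $\phi(\mathcal P)$, by exactly $1$, precisely when $u\in\cup\mathcal P$ and $v\notin\cup\mathcal P$, and symmetrically for $v\to u$.

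Suppose neither orientation preserves the hypothesis. Then there are subpartitions $\mathcal P_1$ (violated by $u\to v$) and $\mathcal P_2$ (violated by $v\to u$), both of which must be tight and must satisfy $u\in\cup\mathcal P_1\setminus\cup\mathcal P_2$ and $v\in\cup\mathcal P_2\setminus\cup\mathcal P_1$. I would then uncross $\mathcal P_1$ and $\mathcal P_2$ into subpartitions $\mathcal P_\cap$ and $\mathcal P_\cup$ with $\cup\mathcal P_\cap=\cup\mathcal P_1\cap\cup\mathcal P_2$ and $\cup\mathcal P_\cup=\cup\mathcal P_1\cup\cup\mathcal P_2$, and show $\phi(\mathcal P_\cap)+\phi(\mathcal P_\cup)<\phi(\mathcal P_1)+\phi(\mathcal P_2)=0$, contradicting $\phi\ge 0$. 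Three inputs drive this: (i) submodularity of $b$ gives $b(\cup\mathcal P_\cap)+b(\cup\mathcal P_\cup)\le b(\cup\mathcal P_1)+b(\cup\mathcal P_2)$; (ii) assembling $\mathcal P_\cup$ from unions of crossing members and $\mathcal P_\cap$ from their intersections, and applying intersecting supermodularity of $h$ along these crossings (valid because crossing members intersect), gives $\sum_{\mathcal P_\cap}h+\sum_{\mathcal P_\cup}h\ge\sum_{\mathcal P_1}h+\sum_{\mathcal P_2}h$; and (iii) a per-member case check gives $e_{E\cup A}(\mathcal P_\cap)+e_{E\cup A}(\mathcal P_\cup)\le e_{E\cup A}(\mathcal P_1)+e_{E\cup A}(\mathcal P_2)-1$, where the decisive $-1$ is supplied by $e$ itself: since $u\in\cup\mathcal P_1\setminus\cup\mathcal P_2$ and $v\in\cup\mathcal P_2\setminus\cup\mathcal P_1$, the edge $e$ enters a member of each of $\mathcal P_1,\mathcal P_2$ (contributing $2$) but enters no member of $\mathcal P_\cap$ and at most one of $\mathcal P_\cup$ (contributing $\le 1$). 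Combining (i)--(iii) with integrality yields the strict inequality.

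The genuinely delicate step, and the one I expect to be the main obstacle, is the uncrossing producing $\mathcal P_\cap$ and $\mathcal P_\cup$: one must keep both collections \emph{disjoint} while simultaneously hitting the prescribed unions $\cup\mathcal P_1\cap\cup\mathcal P_2$ and $\cup\mathcal P_1\cup\cup\mathcal P_2$ (needed for (i)) and ordering the supermodular merges so that each merge is between intersecting sets (needed for (ii)). When the intersection pattern between the members of $\mathcal P_1$ and of $\mathcal P_2$ contains cycles, the naive construction overcounts the intersection sets placed in $\mathcal P_\cap$; to avoid this I would pass to an extremal counterexample, among all bad pairs $(\mathcal P_1,\mathcal P_2)$ maximizing $\sum_{X\in\mathcal P_1}|X|^2+\sum_{Y\in\mathcal P_2}|Y|^2$, so that no member of $\mathcal P_1$ properly crosses a member of $\mathcal P_2$. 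In this cross-free situation the intersections are all by containment, $\mathcal P_\cap$ and $\mathcal P_\cup$ telescope exactly (indeed (ii) holds with equality), and (i)--(iii) close the argument. Verifying that this extremal/cross-free reduction is legitimate — that a single properly crossing pair can always be uncrossed without destroying tightness or the separation $u\in\cup\mathcal P_1\setminus\cup\mathcal P_2$, $v\in\cup\mathcal P_2\setminus\cup\mathcal P_1$ — is the part demanding the most care. The remaining ingredients (necessity, the reduction to a single edge, and the $b$- and $h$-inequalities themselves) are routine.
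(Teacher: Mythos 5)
Your proposal is essentially correct, and its core machinery is the same as the paper's: the paper actually proves the hypergraph generalization (Conjecture~\ref{orient-hyper}, of which Theorem~\ref{orient-graph} is the special case of size-two hyperedges) by orienting edges one at a time while preserving the subpartition inequality, and by uncrossing two tight subpartitions; your inequalities (i)--(iii) correspond exactly to the paper's displayed inequality~(\ref{ineq-hb}) and its Claim~\ref{claim-supseteq}. Where you genuinely differ is the endgame. You exploit the fact that a graph edge has only two orientations: if both fail, two tight subpartitions separate $u$ from $v$, and the edge itself supplies a strict $-1$ in the counting inequality, yielding an immediate contradiction with $\phi\ge 0$. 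The paper instead argues constructively: it shows that the family $\mathcal{S}$ of tight subpartitions entered by the (hyper)edge $Y$ is closed under the intersection operation ($\mathcal{P}_3\in\mathcal{S}$ whenever $\mathcal{P}_1,\mathcal{P}_2\in\mathcal{S}$), picks by finiteness a $\mathcal{P}_0\in\mathcal{S}$ with $\cup\mathcal{P}_0$ equal to the intersection of all $\cup\mathcal{P}$ over $\mathcal{P}\in\mathcal{S}$, and orients $Y$ toward a vertex $y\in Y\cap(\cup\mathcal{P}_0)$, which then lies in $\cup\mathcal{P}$ for every tight $\mathcal{P}$. Your shortcut is the graph specialization of that closure claim (in the paper's language, $Y\in\mathcal{E}(\mathcal{P}_3)$ would force $\{u,v\}$ to meet $\cup\mathcal{P}_1\cap\cup\mathcal{P}_2$, which your separation forbids), and it buys a shorter argument; what it does not buy is extendability to hyperedges, where one needs a single vertex of $Y$ common to all tight subpartitions --- exactly what closure plus minimality delivers. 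Finally, the ``delicate step'' you flag is resolved in the paper not by an extremal cross-free reduction over bad pairs (which, as you suspect, is awkward: merging one crossing pair can destroy the subpartition property, tightness, or the separation), but by uncrossing the whole multiset $\mathcal{P}_1\uplus\mathcal{P}_2$ into a laminar family covering each vertex of $(\cup\mathcal{P}_1)\cap(\cup\mathcal{P}_2)$ twice and every other covered vertex once, then splitting off the maximal members as the partition of the union and the rest as the partition of the intersection; this sidesteps all preservation issues because sub/supermodularity and the counting claim are invoked only once, at the end.
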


There remains two open conjectures, the problem of matroid-rooted $(f, g)$-bounded $k$-regular packings of $k$-regular mixed hyperarborescences (Conjecture~\ref{Mfgkmixhyper}) and a possible extension of Theorem~\ref{orient-graph} to mixed hypergraphs (Conjecture~\ref{orient-hyper}), which would imply Conjecture~\ref{Mfgkmixhyper} by the same proof of Theorem~\ref{Mfgkmixgraph}~\cite{S-24}.

\begin{con}[\cite{S-24}]\label{Mfgkmixhyper}
Let $\mathcal{F} = (V, \mathcal{E} \cup \mathcal{A})$ be a mixed hypergraph, $k \in \mathbb{Z}_{+}$, $f, g \in \mathbb{Z}^{V}_{+}$, $S$ a multiset of vertices in $V$ , and $M = (S, r_{M})$ a matroid. There exists an $M$-rooted $(f, g)$-bounded k-regular packing of mixed hyperarborescences in $\mathcal{F}$ if and only if (\ref{inq-gf}) and (\ref{inq-gkrm}) hold and for all $U, W \subseteq V$ and subpartition $\mathcal{P}$ of $W$,
\begin{equation}
e_{\mathcal{E} \cup \mathcal{A}}(\mathcal{P}) + r_{M}(S_{U} ) + g_{k}(W - U) \geq k|\mathcal{P}| + f(U - W).
\end{equation}
\end{con}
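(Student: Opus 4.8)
The plan is to obtain Conjecture~\ref{Mfgkmixhyper} as a consequence of the mixed-hypergraph orientation theorem (Conjecture~\ref{orient-hyper}), by repeating verbatim the reduction through which Szigeti~\cite{S-24} deduced Theorem~\ref{Mfgkmixgraph} from Theorem~\ref{orient-graph}. The key reformulation is that $\mathcal{F}$ carries an $M$-rooted $(f,g)$-bounded $k$-regular packing of mixed hyperarborescences exactly when $\mathcal{E}$ has an orientation $\overrightarrow{\mathcal{E}}$ for which the dypergraph $(V,\overrightarrow{\mathcal{E}}\cup\mathcal{A})$ carries an $M$-rooted $(f,g)$-bounded $k$-regular packing of hyperarborescences. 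By Szigeti's dypergraph theorem~\cite{S-24} this is controlled by (\ref{inq-gf}), (\ref{inq-gkrm}) and the family of subpartition inequalities $e_{\overrightarrow{\mathcal{E}}\cup\mathcal{A}}(\mathcal{P})+r_M(S_U)+g_k(W-U)\ge k|\mathcal{P}|+f(U-W)$; since (\ref{inq-gf}) and (\ref{inq-gkrm}) are independent of the orientation, everything reduces to deciding when some orientation meets all the subpartition inequalities.

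To cast this as an instance of the orientation theorem I would take $h\equiv k$ on the nonempty sets, which is integer-valued and intersecting supermodular (it is modular on intersecting pairs, as all of $X,Y,X\cup Y,X\cap Y$ are then nonempty), so that $k|\mathcal{P}|=\sum_{X\in\mathcal{P}}h(X)$, and absorb the matroid-, $g_k$- and $f$-terms into a single set function of $Z=\cup\mathcal{P}$ by setting
\[
b(Z)=\min_{U\subseteq V,\ W\supseteq Z}\bigl\{r_M(S_U)+g_k(W-U)-f(U-W)\bigr\}.
\]
A short optimization shows the inner minimum is attained at $W=Z$, so $b(Z)=\min_{U\subseteq V}\{r_M(S_U)+g_k(Z\setminus U)-f(U\setminus Z)\}$, and with these choices the subpartition inequalities become precisely $e_{\mathcal{E}\cup\mathcal{A}}(\mathcal{P})\ge\sum_{X\in\mathcal{P}}h(X)-b(\cup\mathcal{P})$, which is the hypothesis of the orientation theorem. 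The delicate point of the reduction is that $b$ is submodular: $r_M(S_U)$ and $g_k(Z\setminus U)$ are submodular in $(U,Z)$, but $-f(U\setminus Z)$ is not jointly submodular, so partial minimization cannot be invoked mechanically and submodularity of the minimized function must be argued directly. This step, however, is identical to the one in the mixed-graph case, because $b$ depends only on $M,S,f,g,k$ and not on the (hyper)graph.

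Granting submodularity of $b$, the sufficiency direction is immediate: the hypotheses of the conjecture say exactly that $e_{\mathcal{E}\cup\mathcal{A}}(\mathcal{P})\ge\sum_{X\in\mathcal{P}}h(X)-b(\cup\mathcal{P})$ for every subpartition $\mathcal{P}$, so the hypergraph orientation theorem provides an orientation $\overrightarrow{\mathcal{E}}$ satisfying $e_{\overrightarrow{\mathcal{E}}\cup\mathcal{A}}(\mathcal{P})\ge\sum_{X\in\mathcal{P}}h(X)-b(\cup\mathcal{P})$; unwinding $b$ shows $(V,\overrightarrow{\mathcal{E}}\cup\mathcal{A})$ meets all the subpartition inequalities, and Szigeti's dypergraph theorem then produces the required packing of hyperarborescences, which is precisely a packing of mixed hyperarborescences in $\mathcal{F}$. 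For necessity I would note that orienting a hyperedge can only decrease the number of members of $\mathcal{P}$ it enters, whence $e_{\mathcal{E}\cup\mathcal{A}}(\mathcal{P})\ge e_{\overrightarrow{\mathcal{E}}\cup\mathcal{A}}(\mathcal{P})$ for any witnessing orientation, and then transport the (necessary) dypergraph inequalities back across this monotonicity.

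The real obstacle is not the reduction but the orientation theorem on which it rests, namely establishing the hypergraph analogue of Theorem~\ref{orient-graph}: that the single condition $e_{\mathcal{E}\cup\mathcal{A}}(\mathcal{P})\ge\sum_{X\in\mathcal{P}}h(X)-b(\cup\mathcal{P})$ already forces an orientation of $\mathcal{E}$ preserving it. A hyperedge can enter a set through several of its vertices and, upon choosing one head, interacts with many parts of a subpartition at once, so the uncrossing and arc-reversing arguments available for mixed graphs do not carry over directly. I expect the main difficulty to be exactly this: controlling the supermodular-like covering demand $\sum_{X\in\mathcal{P}}h(X)-b(\cup\mathcal{P})$ simultaneously over all subpartitions while orienting hyperedges, presumably by an induction that splits off or contracts a tight set and reorients hyperedges without creating a violation elsewhere.
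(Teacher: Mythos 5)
Your reduction is the right one, and it is exactly the route the paper takes: Conjecture~\ref{Mfgkmixhyper} is obtained as a corollary of the orientation result (Conjecture~\ref{orient-hyper}) by repeating Szigeti's derivation of Theorem~\ref{Mfgkmixgraph} from Theorem~\ref{orient-graph} in~\cite{S-24}, with $h\equiv k$ and the matroid/$g_k$/$f$ terms absorbed into a submodular $b(\cup\mathcal{P})$ by minimization; like you, the paper defers the details of this reduction (including submodularity of $b$) to~\cite{S-24}. So the reduction half of your proposal is correct and consistent with the paper.

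The genuine gap is that you never prove the orientation theorem itself. You correctly identify it as ``the real obstacle,'' but then offer only a one-sentence speculation (``an induction that splits off or contracts a tight set''), and that is precisely the part that was an open conjecture and constitutes the entire mathematical content of this paper. Moreover, the speculation does not match what actually works. The paper's proof is a greedy, hyperedge-by-hyperedge orientation with no contraction and no global induction: fix a hyperedge $Y$ and let $\mathcal{S}$ be the family of subpartitions $\mathcal{P}$ that are tight for (\ref{sufficiency}) and such that $Y$ enters $\cup\mathcal{P}$. If $\mathcal{S}=\emptyset$, any orientation of $Y$ preserves (\ref{sufficiency}). Otherwise, one shows that $\mathcal{S}$ is closed under an uncrossing operation: given $\mathcal{P}_1,\mathcal{P}_2\in\mathcal{S}$, uncrossing their union into a partition $\mathcal{P}_3$ of $(\cup\mathcal{P}_1)\cap(\cup\mathcal{P}_2)$ and a partition $\mathcal{P}_4$ of $(\cup\mathcal{P}_1)\cup(\cup\mathcal{P}_2)$, and combining the intersecting supermodularity of $h$, the submodularity of $b$, and a multiset inequality comparing the hyperedges and dyperedges entering $\mathcal{P}_1,\mathcal{P}_2$ with those entering $\mathcal{P}_3,\mathcal{P}_4$, forces all inequalities to be tight and shows $\mathcal{P}_3\in\mathcal{S}$. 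Hence there is a $\mathcal{P}_0\in\mathcal{S}$ whose union is contained in $\cup\mathcal{P}$ for every $\mathcal{P}\in\mathcal{S}$; picking $y\in Y\cap(\cup\mathcal{P}_0)$ and orienting $Y$ to the dyperedge $(Y-y,y)$ keeps $e_{\mathcal{E}\cup\mathcal{A}}(\mathcal{P})$ unchanged for every tight $\mathcal{P}$, so (\ref{sufficiency}) survives the orientation, and iterating over all hyperedges yields (\ref{efficiency}). Without this argument (or some complete substitute for it), your proposal does not prove the statement.
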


\begin{con}[\cite{S-24}]\label{conj-orient}\label{orient-hyper}
Let $\mathcal{F}=(V , \mathcal{E} \cup \mathcal{A})$ be a mixed hypergraph, $h$ an integer-valued, intersecting supermodular function on $V$ and $b$ a submodular function on $V$. There exists an orientation $\overrightarrow{\mathcal{E}}$ of $\mathcal{E}$ such that
\begin{equation}\label{efficiency}
e_{\overrightarrow{\mathcal{E}} \cup \mathcal{A}}(\mathcal{P}) \geq \sum_{X \in \mathcal{P}}h(X)-b(\cup \mathcal{P})~~~ \text{for every subpartition $\mathcal{P}$ of $V$}
\end{equation}
if and only if
\begin{equation}\label{sufficiency}
e_{\mathcal{E} \cup \mathcal{A}}(\mathcal{P}) \geq \sum_{X \in \mathcal{P}}h(X)-b(\cup \mathcal{P}) ~~~\text{for every subpartition $\mathcal{P}$ of $V$.}
\end{equation}
\end{con}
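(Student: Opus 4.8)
The plan is to prove necessity directly and sufficiency by induction on the number $|\mathcal{E}|$ of undirected hyperedges, orienting one hyperedge at a time. Necessity is immediate: if $\overrightarrow{\mathcal{E}}$ satisfies (\ref{efficiency}), then $e_{\mathcal{E}\cup\mathcal{A}}(\mathcal{P})\ge e_{\overrightarrow{\mathcal{E}}\cup\mathcal{A}}(\mathcal{P})$ for every subpartition $\mathcal{P}$, because each oriented dyperedge $(X-x,x)$ that enters a member $Y$ (so $x\in Y$ and $X\setminus Y\ne\emptyset$) arises from a hyperedge $X$ that also enters $Y$ (as $X\cap Y\ni x$ and $X\setminus Y\ne\emptyset$); orienting can only destroy entrances, never create them. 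Hence (\ref{efficiency}) forces (\ref{sufficiency}).

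For sufficiency the base case $\mathcal{E}=\emptyset$ is trivial, since then (\ref{sufficiency}) is already (\ref{efficiency}). For the inductive step, assume (\ref{sufficiency}) and fix any $Z\in\mathcal{E}$; it suffices to find $z\in Z$ such that the mixed hypergraph $\mathcal{F}'=(V,(\mathcal{E}-Z)\cup(\mathcal{A}+(Z-z,z)))$ again satisfies (\ref{sufficiency}), for then the induction hypothesis applied to $\mathcal{F}'$ orients $\mathcal{E}-Z$, and together with $(Z-z,z)$ this orients all of $\mathcal{E}$. Call a subpartition \emph{tight} if equality holds in (\ref{sufficiency}). Orienting $Z$ toward $z$ changes only the contribution of $Z$ to $e_{\mathcal{E}\cup\mathcal{A}}$, and a short case analysis shows the left-hand side of (\ref{sufficiency}) drops, by exactly $1$, at a subpartition $\mathcal{P}$ if and only if $Z$ enters some member of $\mathcal{P}$ while $z\notin\cup\mathcal{P}$: indeed, if $z$ lies in a member $Y$ and $Z$ enters $\mathcal{P}$, then $Z\not\subseteq Y$ (else $Z$ would enter no member), so $(Z-z,z)$ still enters $Y$. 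Thus orienting $Z$ toward $z$ preserves (\ref{sufficiency}) precisely when $z\in Z\cap\cup\mathcal{P}$ for every tight subpartition $\mathcal{P}$ entered by $Z$. Since each such $\mathcal{P}$ has $Z\cap\cup\mathcal{P}\ne\emptyset$, the whole inductive step reduces to showing that the \emph{shadows} $\{\,Z\cap\cup\mathcal{P}:\mathcal{P}\text{ tight and entered by }Z\,\}$ have a common element.

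To produce this common element, the plan is to establish an uncrossing inequality for the deficiency $\beta(\mathcal{P}):=e_{\mathcal{E}\cup\mathcal{A}}(\mathcal{P})-\sum_{X\in\mathcal{P}}h(X)+b(\cup\mathcal{P})$ (so (\ref{sufficiency}) reads $\beta\ge 0$ and tight means $\beta=0$). Given two tight subpartitions $\mathcal{P},\mathcal{Q}$, I would uncross them by repeatedly taking a crossing pair of members $P\in\mathcal{P}$, $Q\in\mathcal{Q}$ with $P\cap Q\ne\emptyset$ and replacing them by $P\cup Q$ and $P\cap Q$, producing laminar subpartitions $\mathcal{P}',\mathcal{Q}'$ with $\cup\mathcal{P}'=(\cup\mathcal{P})\cup(\cup\mathcal{Q})$ and $\cup\mathcal{Q}'=(\cup\mathcal{P})\cap(\cup\mathcal{Q})$. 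Three ingredients push $\beta$ the right way: the intersecting supermodularity of $h$ gives $h(P)+h(Q)\le h(P\cup Q)+h(P\cap Q)$, legitimate exactly because $P\cap Q\ne\emptyset$; the submodularity of $b$ controls the union terms via $b(\cup\mathcal{P})+b(\cup\mathcal{Q})\ge b((\cup\mathcal{P})\cup(\cup\mathcal{Q}))+b((\cup\mathcal{P})\cap(\cup\mathcal{Q}))$; and a submodular-type inequality for the entering-count $e_{\mathcal{E}\cup\mathcal{A}}$ ensures the total number of entrances does not increase. These combine to $\beta(\mathcal{P})+\beta(\mathcal{Q})\ge\beta(\mathcal{P}')+\beta(\mathcal{Q}')$, so when $\mathcal{P},\mathcal{Q}$ are tight both $\mathcal{P}'$ and $\mathcal{Q}'$ are tight as well. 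Choosing a tight subpartition entered by $Z$ whose shadow $Z\cap\cup\mathcal{P}$ is inclusion-minimal and uncrossing it against an arbitrary competitor, one reads off from $\cup\mathcal{Q}'=(\cup\mathcal{P})\cap(\cup\mathcal{Q})$ that the minimal shadow is contained in every shadow; any vertex of it is then a valid head $z$, which completes the induction.

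I expect the genuine obstacle to be the submodular inequality for the entering-count $e_{\mathcal{E}\cup\mathcal{A}}$ over subpartitions, together with the verification that the uncrossed meet stays entered by $Z$ (i.e.\ keeps a nonempty shadow) so that minimality can be exploited. Unlike the mixed-graph case of Theorem~\ref{orient-graph}, where an undirected edge meets at most two members, here a single hyperedge of $\mathcal{E}$ may straddle many members of a subpartition yet is counted only once in $e_{\mathcal{E}\cup\mathcal{A}}$; tracking how many entrances merging and splitting members creates or destroys is the delicate point, and it is also exactly where the \emph{intersecting} (rather than fully) supermodular hypothesis on $h$ must be respected, since members may be merged only when they intersect. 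Once this uncrossing lemma is in place the reduction above is routine, and the corollary Conjecture~\ref{Mfgkmixhyper} then follows from this orientation theorem in the same way that Theorem~\ref{Mfgkmixgraph} follows from Theorem~\ref{orient-graph}.
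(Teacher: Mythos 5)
Your overall strategy is the same as the paper's: necessity by the one-line observation that orienting can only destroy entrances, and sufficiency by orienting one hyperedge $Z$ at a time, which you correctly reduce to showing that the ``shadows'' $Z\cap\cup\mathcal{P}$, over all tight subpartitions that $Z$ could spoil, share a common vertex; like the paper, you then try to get this from an uncrossing of two tight subpartitions into a ``join'' and a ``meet'', using the intersecting supermodularity of $h$ and the submodularity of $b$. Your characterization of exactly when orienting $Z$ toward $z$ decreases the entering count is also correct. The problem is the step you yourself flag as the genuine obstacle: it is not a technicality, and your framework as stated cannot deliver it.

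Concretely, you posit only a \emph{numerical} inequality $e_{\mathcal{E}\cup\mathcal{A}}(\mathcal{P})+e_{\mathcal{E}\cup\mathcal{A}}(\mathcal{Q})\ge e_{\mathcal{E}\cup\mathcal{A}}(\mathcal{P}')+e_{\mathcal{E}\cup\mathcal{A}}(\mathcal{Q}')$, and from the resulting $\beta(\mathcal{P})+\beta(\mathcal{Q})\ge\beta(\mathcal{P}')+\beta(\mathcal{Q}')$ you can conclude only that the meet $\mathcal{Q}'$ is \emph{tight} --- not that it is still entered by $Z$, i.e.\ not that its shadow $Z\cap(\cup\mathcal{P})\cap(\cup\mathcal{Q})$ is nonempty. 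Nothing in a purely numerical argument prevents the shadows $Z\cap\cup\mathcal{P}$ and $Z\cap\cup\mathcal{Q}$ from being disjoint; in that case the meet has empty shadow, falls outside your family of tight subpartitions entered by $Z$, and your inclusion-minimality argument produces no contradiction and no common head. The paper closes exactly this hole by proving the count inequality in a stronger, element-wise form (Claim~\ref{claim-supseteq}): the multiset $(\mathcal{E}(\mathcal{P}_{1})\cup\mathcal{A}(\mathcal{P}_{1}))\uplus(\mathcal{E}(\mathcal{P}_{2})\cup\mathcal{A}(\mathcal{P}_{2}))$ contains $(\mathcal{E}(\mathcal{P}_{3})\cup\mathcal{A}(\mathcal{P}_{3}))\uplus(\mathcal{E}(\mathcal{P}_{4})\cup\mathcal{A}(\mathcal{P}_{4}))$ as a multiset. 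Tightness of $\mathcal{P}_{1},\mathcal{P}_{2}$ then forces this containment to be an \emph{equality} of multisets; since $Y$ enters both $\cup\mathcal{P}_{1}$ and $\cup\mathcal{P}_{2}$, it has multiplicity two on the left, hence multiplicity two on the right, hence enters some member of the meet $\mathcal{P}_{3}$, and since $Y\not\subseteq\cup\mathcal{P}_{3}$ it enters $\cup\mathcal{P}_{3}$, so $\mathcal{P}_{3}\in\mathcal{S}$. This multiset-level bookkeeping is the missing idea in your proposal; without it (or some substitute), the induction does not close. The numerical count inequality that you also leave unproven is indeed true and provable by a case analysis on how a hyperedge meets $\mathcal{P}_{1},\mathcal{P}_{2},\mathcal{P}_{3},\mathcal{P}_{4}$, so that part of your plan is sound in principle, but it is the weaker of the two statements and not the one that makes the shadows intersect.
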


In this paper,  we prove Conjecture~\ref{orient-hyper}.

\section{Proof of Conjecture~\ref{conj-orient}}

The necessity is obtained from the fact every orientation $\overrightarrow{\mathcal{E}}$ of $\mathcal{E}$ and every subpartition $\mathcal{P}$ of $V$, we have  $e_{\mathcal{E} \cup \mathcal{A}} (\mathcal{P}) \geq e_{\overrightarrow{\mathcal{E}} \cup \mathcal{A}}(\mathcal{P})$.

To prove the sufficiency, we orient each hyperedge in $\mathcal{E}$ one by one such that  (\ref{sufficiency}) still holds after the orientation. Finally, we obtain an orientation of $\mathcal{E}$ such that (\ref{efficiency}) holds.

Fix a hyperedge $Y \in \mathcal{E}$ and we will show how to orient $Y$. Let
$$\mathcal{S}:=\{ \mathcal{P} \in \Omega(V): \text{ $Y$ enters $\cup \mathcal{P}$ and}~e_{\mathcal{E} \cup \mathcal{A}}(\mathcal{P})= \sum_{X \in \mathcal{P}}h(X)-b(\cup \mathcal{P})\}.$$

If $\mathcal{S} =\emptyset$, then for any subpartition $\mathcal{P} \in \Omega(V)$, two cases happen:
\begin{itemize}
\item[(i)] $Y \subseteq \cup \mathcal{P}$ or $V \setminus \cup \mathcal{P}$;
\item[(ii)] $e_{\mathcal{E} \cup \mathcal{A}}(\mathcal{P})> \sum_{X \in \mathcal{P}}h(X)-b(\cup \mathcal{P})$.
\end{itemize}
Orient $Y$ in any direction to $\overrightarrow{Y}$ and we have $(\ref{sufficiency})$ still holds after the orientation since for Case (i), $e_{(\mathcal{E}-Y) \cup (\mathcal{A}+\overrightarrow{Y}) }= e_{\mathcal{E} \cup \mathcal{A}}$ and for Case (ii), $e_{(\mathcal{E}-Y) \cup (\mathcal{A}+\overrightarrow{Y}) } \geq  e_{\mathcal{E} \cup \mathcal{A}}-1$. In the following, suppose $\mathcal{S} \neq \emptyset$.

Let $\mathcal{P}_{1}, \mathcal{P}_{2} \in \mathcal{S}$ and $\mathcal{P} = \mathcal{P}_{1} \uplus \mathcal{P}_{2}$. Note that $\mathcal{P}$ covers each vertex in $(\cup \mathcal{P}_{1}) \cap (\cup \mathcal{P}_{2}) $ twice and each vertex in $ ((\cup \mathcal{P}_{1}) \cup (\cup \mathcal{P}_{2}) ) \setminus ( (\cup \mathcal{P}_{1}) \cap (\cup \mathcal{P}_{2}) ) $ once. Using the usual uncrossing technique for $\mathcal{P}$, we obtain a laminar family $\mathcal{P}'$ that covers  each vertex in $(\cup \mathcal{P}_{1}) \cap (\cup \mathcal{P}_{2}) $ twice and each vertex in $ ((\cup \mathcal{P}_{1}) \cup (\cup \mathcal{P}_{2}) ) \setminus ( (\cup \mathcal{P}_{1}) \cap (\cup \mathcal{P}_{2}) ) $ once. Then $\mathcal{P}'$ can be decomposed into a partition $\mathcal{P}_{3}$  of $(\cup \mathcal{P}_{1}) \cap (\cup \mathcal{P}_{2}) $ and a partition $\mathcal{P}_{4}$ of $(\cup \mathcal{P}_{1}) \cup (\cup \mathcal{P}_{2}) $. Since $h$ is intersecting supermodular and $b$ is submodular, we have
\[
\sum_{X \in \mathcal{P}_{1}} h(X) +\sum_{X \in \mathcal{P}_{2}} h(X) = \sum_{X \in \mathcal{P}} h(X) \leq  \sum_{X \in \mathcal{P}'} h(X)=\sum_{X \in \mathcal{P}_{3}} h(X) +\sum_{X \in \mathcal{P}_{4}} h(X) ~\text{and}
\]
\[
b(\cup \mathcal{P}_{1}) +b(\cup \mathcal{P}_{2}) \geq  b(\cup \mathcal{P}_{3}) +b(\cup \mathcal{P}_{4}).
\]
Thus
\begin{equation}\label{ineq-hb}
\sum_{X \in \mathcal{P}_{1}} h(X)-b(\cup \mathcal{P}_{1}) +\sum_{X \in \mathcal{P}_{2}} h(X)-b(\cup \mathcal{P}_{2}) \leq \sum_{X \in \mathcal{P}_{3}} h(X)-b(\cup \mathcal{P}_{3}) +\sum_{X \in \mathcal{P}_{4}} h(X)-b(\cup \mathcal{P}_{4}).
\end{equation}

\begin{claim}\label{claim-supseteq}
$(\mathcal{E}(\mathcal{P}_{1}) \cup \mathcal{A}(\mathcal{P}_{1}) ) \uplus (\mathcal{E}(\mathcal{P}_{2}) \cup \mathcal{A}(\mathcal{P}_{2}) ) \supseteq (\mathcal{E}(\mathcal{P}_{3}) \cup \mathcal{A}(\mathcal{P}_{3}) ) \uplus (\mathcal{E}(\mathcal{P}_{4}) \cup \mathcal{A}(\mathcal{P}_{4}) )$.
\end{claim}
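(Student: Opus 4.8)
The plan is to prove the claimed multiset containment one element at a time. Write $I=(\cup\mathcal{P}_1)\cap(\cup\mathcal{P}_2)$ and $U=(\cup\mathcal{P}_1)\cup(\cup\mathcal{P}_2)$, so that $\cup\mathcal{P}_3=I$ and $\cup\mathcal{P}_4=U$, and for each $Z\in\mathcal{E}\cup\mathcal{A}$ and each $i\in\{1,2,3,4\}$ set $a_i(Z)=1$ if $Z$ enters some member of $\mathcal{P}_i$ and $a_i(Z)=0$ otherwise. Since $\mathcal{E}$ and $\mathcal{A}$ are disjoint, the containment is equivalent to the single scalar inequality $a_1(Z)+a_2(Z)\ge a_3(Z)+a_4(Z)$ for every $Z$ (taking cardinalities of both multisets then yields $e_{\mathcal{E}\cup\mathcal{A}}(\mathcal{P}_1)+e_{\mathcal{E}\cup\mathcal{A}}(\mathcal{P}_2)\ge e_{\mathcal{E}\cup\mathcal{A}}(\mathcal{P}_3)+e_{\mathcal{E}\cup\mathcal{A}}(\mathcal{P}_4)$, which is what the remainder of the proof needs). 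Before the case analysis I would record two structural facts about $\mathcal{P}'=\mathcal{P}_3\uplus\mathcal{P}_4$, both following from the fact that uncrossing only forms intersections and unions, so that every member of $\mathcal{P}'$ is a union of ``cells'' of the common refinement of $\mathcal{P}_1$ and $\mathcal{P}_2$: (C) each member of $\mathcal{P}_1$ and each member of $\mathcal{P}_2$ lies inside a single member of $\mathcal{P}_4$ (equivalently, $\mathcal{P}_4$ consists of the unions of the connected components of the block-overlap graph of $\mathcal{P}_1,\mathcal{P}_2$); and (D) for $v\in I$, the member $X_3(v)$ of $\mathcal{P}_3$ containing $v$ contains $X_1(v)\cap X_2(v)$, where $X_j(v)$ denotes the member of $\mathcal{P}_j$ containing $v$.

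For a dyperedge $(Z,z)\in\mathcal{A}$, entering a member of $\mathcal{P}_i$ means that $z$ lies in some member $X_i(z)$ and some tail lies outside it; as the members of $\mathcal{P}_i$ are disjoint this gives $a_i((Z,z))=1$ exactly when $z\in\cup\mathcal{P}_i$ and $Z\not\subseteq X_i(z)$. I would split on the location of $z$. If $z\notin U$ all four indicators vanish. If $z\in(\cup\mathcal{P}_1)\setminus(\cup\mathcal{P}_2)$ (and symmetrically) then $a_2=a_3=0$, and since $X_1(z)\subseteq X_4(z)$ by (C) one gets $a_4\le a_1$. If $z\in I$, then with $X_1=X_1(z)$, $X_2=X_2(z)$ facts (C) and (D) give $X_1\cap X_2\subseteq X_3(z)$ and $X_1,X_2\subseteq X_4(z)$, so $Z\not\subseteq X_4(z)$ forces $Z\not\subseteq X_1$ and $Z\not\subseteq X_2$, while $Z\not\subseteq X_3(z)$ forces $Z\not\subseteq X_1$ or $Z\not\subseteq X_2$; hence $a_4\le\min(a_1,a_2)$ and $a_3\le\max(a_1,a_2)$, and $a_3+a_4\le a_1+a_2$.

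For a hyperedge $Z\in\mathcal{E}$ the pivot I would use is: if $a_4(Z)=1$ then $Z$ is contained in no single member of $\mathcal{P}_1$ and in no single member of $\mathcal{P}_2$, for if $Z\subseteq B$ with $B$ a member of $\mathcal{P}_1$, then by (C) $Z$ sits inside one member of $\mathcal{P}_4$ and meets no other, so $Z$ enters none, contradicting $a_4=1$. Granting this, whenever $a_3(Z)=1$ the hyperedge meets $\cup\mathcal{P}_3=I\subseteq\cup\mathcal{P}_1$, hence meets a member of $\mathcal{P}_1$; being contained in none it must enter one, so $a_1(Z)=1$, and symmetrically $a_2(Z)=1$, which disposes of the case $a_3=a_4=1$. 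When $a_3+a_4=1$ the same ``meets but is not contained in any single block'' argument, applied to the entered member of $\mathcal{P}_3$ (if $a_3=1$) or of $\mathcal{P}_4$ (if $a_4=1$), gives $a_1+a_2\ge 1$.

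I expect the real obstacle to be the verification of (C) and (D) rather than the case analysis itself, because the inequality is genuinely \emph{false} for an arbitrary laminar family with the prescribed coverage: one can exhibit a coverage-correct decomposition of $\mathcal{P}_1\uplus\mathcal{P}_2$ in which a member of $\mathcal{P}_4$ cuts across a member of $\mathcal{P}_2$, and then a single dyperedge already violates $a_1+a_2\ge a_3+a_4$. The delicate point is therefore that the family actually produced by uncrossing coarsens both $\mathcal{P}_1$ and $\mathcal{P}_2$ (property (C)) and never cuts strictly below the cells (property (D)). I would establish both through the cell/atom invariant: every member of $\mathcal{P}'$ is a union of cells, and any legal uncrossing step is forced to merge two overlapping blocks into their union, so it can never separate the blocks of a single connected component. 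The residual care is the bookkeeping of the degenerate hyperedge configurations, where $Z$ meets a relevant union yet is contained in one block and so ``enters some member'' fails.
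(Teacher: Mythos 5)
Your proof is correct, and its skeleton is the same as the paper's: fix one (hyper/dyper)edge and compare its multiplicity on the two sides, i.e.\ show $a_1+a_2\ge a_3+a_4$ elementwise. The genuine difference is in what is made explicit. The paper argues by cases ``$Y_0\notin\mathcal{E}(\mathcal{P}_1)$'' and ``$Y_0\notin\mathcal{E}(\mathcal{P}_1)\cup\mathcal{E}(\mathcal{P}_2)$,'' and in doing so it silently uses your fact (C): its step ``$Y_0$ is contained in some member of $\mathcal{P}_1$ and thus $Y_0\notin\mathcal{E}(\mathcal{P}_4)$'' is valid only because each member of $\mathcal{P}_1$ lies inside a single member of $\mathcal{P}_4$, which the paper never states or proves. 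More importantly, the paper's second case analysis lists only three subcases and omits the fourth: $Y_0$ contained in a member $X_1$ of $\mathcal{P}_1$ \emph{and} in a member $X_2$ of $\mathcal{P}_2$, so that $Y_0\subseteq X_1\cap X_2\subseteq\cup\mathcal{P}_3$. That is exactly the configuration where your fact (D) ($X_1\cap X_2$ lies inside one member of $\mathcal{P}_3$) is indispensable, and your observation that the claim is \emph{false} for an arbitrary coverage-correct laminar family (e.g.\ $\mathcal{P}_1=\{\{1,2,3\}\}$, $\mathcal{P}_2=\{\{2,3,4\}\}$ with the ``bad'' decomposition $\mathcal{P}_3=\{\{2\},\{3\}\}$, $\mathcal{P}_4=\{V\}$, and the hyperedge $\{2,3\}$) shows this subcase cannot be waved away. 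So your version closes a real gap in the paper's own write-up. The price you pay is having to verify (C) and (D), and your sketches are the right arguments and do go through: (C) follows by induction on uncrossing steps, since replacing intersecting $X',Y'$ by $X'\cap Y'$ and $X'\cup Y'$ keeps every set that was inside $X'$ or $Y'$ inside $X'\cup Y'$; (D) follows because every member of the uncrossed family is a union of cells of the common refinement, and the cell containing $v\in I$ is precisely $X_1(v)\cap X_2(v)$. One small point to tighten: in the hyperedge case with $a_3=1$, $a_4=0$, the ``meets but is not contained in any single block'' phrase should be spelled out as: if $Z$ were contained in blocks of both $\mathcal{P}_1$ and $\mathcal{P}_2$, then $Z\subseteq X_1\cap X_2\subseteq X_3(v)$ by (D) for any $v\in Z\cap\cup\mathcal{P}_3$, contradicting that $Z$ enters a member of $\mathcal{P}_3$; this is an application of (D), not of the $a_4$-pivot.
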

\begin{proof}
Let $Y_{0} \in \mathcal{E}$. If $Y_{0} \notin \mathcal{E}(\mathcal{P}_{1})$, then we have (i) $Y_{0} \subseteq V \setminus \cup \mathcal{P}_{1}$ and thus $Y_{0} \notin  \mathcal{E}( \mathcal{P}_{3})$; or (ii)  $Y_{0} $ is contained in some member of $\mathcal{P}_{1}$ and thus $Y_{0} \notin \mathcal{E}(\mathcal{P}_{4})$. The same discussion goes for the case that $Y_{0} \notin \mathcal{E}(\mathcal{P}_{2})$. If $Y_{0} \notin \mathcal{E}(\mathcal{P}_{1}) \cup \mathcal{E}(\mathcal{P}_{2})$, then we have (i) $Y_{0} \subseteq V \setminus (\cup \mathcal{P}_{1}) \cup ( \cup \mathcal{P}_{2})$; or (ii) $Y_{0} \subseteq (\cup \mathcal{P}_{1}) \setminus (\cup \mathcal{P}_{2})$ and $Y_{0}$ is contained in some member of $\mathcal{P}_{1}$; or (iii) $Y_{0} \subseteq (\cup \mathcal{P}_{2}) \setminus (\cup \mathcal{P}_{1})$ and $Y_{0}$ is contained in some member of $\mathcal{P}_{2}$. All the three cases above induce that $Y_{0} \notin \mathcal{E}(\mathcal{P}_{3}) \cup \mathcal{E}(\mathcal{P}_{4}) $.

The discussion above shows that $\mathcal{E}(\mathcal{P}_{1})  \uplus \mathcal{E}(\mathcal{P}_{2})  \supseteq \mathcal{E}(\mathcal{P}_{3})  \uplus \mathcal{E}(\mathcal{P}_{4}) $. The same goes for the proof of the fact that $\mathcal{A}(\mathcal{P}_{1})  \uplus \mathcal{A}(\mathcal{P}_{2})  \supseteq \mathcal{A}(\mathcal{P}_{3})  \uplus \mathcal{A}(\mathcal{P}_{4}) $
\end{proof}

\begin{claim}
$\mathcal{P}_{3} \in \mathcal{S}$.
\end{claim}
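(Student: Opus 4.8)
The plan is to verify the two defining properties of $\mathcal{S}$ for $\mathcal{P}_3$ in turn: that $\mathcal{P}_3$ is tight, and that $Y$ enters $\cup\mathcal{P}_3=(\cup\mathcal{P}_1)\cap(\cup\mathcal{P}_2)$. Tightness will come from a chain of inequalities that is forced to collapse to equalities; the entering condition will then be extracted from the multiset equality that this collapse imposes on Claim~\ref{claim-supseteq}.

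First I would assemble the chain. Since $\mathcal{P}_1,\mathcal{P}_2\in\mathcal{S}$ are tight, $e_{\mathcal{E}\cup\mathcal{A}}(\mathcal{P}_1)+e_{\mathcal{E}\cup\mathcal{A}}(\mathcal{P}_2)$ equals $\sum_{X\in\mathcal{P}_1}h(X)-b(\cup\mathcal{P}_1)+\sum_{X\in\mathcal{P}_2}h(X)-b(\cup\mathcal{P}_2)$. By (\ref{ineq-hb}) this is at most $\sum_{X\in\mathcal{P}_3}h(X)-b(\cup\mathcal{P}_3)+\sum_{X\in\mathcal{P}_4}h(X)-b(\cup\mathcal{P}_4)$; applying the hypothesis (\ref{sufficiency}) to the subpartitions $\mathcal{P}_3$ and $\mathcal{P}_4$ bounds this in turn by $e_{\mathcal{E}\cup\mathcal{A}}(\mathcal{P}_3)+e_{\mathcal{E}\cup\mathcal{A}}(\mathcal{P}_4)$; and Claim~\ref{claim-supseteq} (passing from multiset containment to cardinalities) gives $e_{\mathcal{E}\cup\mathcal{A}}(\mathcal{P}_3)+e_{\mathcal{E}\cup\mathcal{A}}(\mathcal{P}_4)\le e_{\mathcal{E}\cup\mathcal{A}}(\mathcal{P}_1)+e_{\mathcal{E}\cup\mathcal{A}}(\mathcal{P}_2)$, closing the loop back to the start. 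Hence every inequality is an equality. The step that applied (\ref{sufficiency}) to $\mathcal{P}_3$ and $\mathcal{P}_4$ is a sum of two valid inequalities, so equality of the sum forces equality in each summand; thus $e_{\mathcal{E}\cup\mathcal{A}}(\mathcal{P}_3)=\sum_{X\in\mathcal{P}_3}h(X)-b(\cup\mathcal{P}_3)$, i.e. $\mathcal{P}_3$ is tight (and symmetrically $\mathcal{P}_4$).

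The entering condition is the delicate point, and here I expect the main obstacle: the naive implication ``$Y$ enters $\cup\mathcal{P}_1$ and $Y$ enters $\cup\mathcal{P}_2$, therefore $Y$ enters $(\cup\mathcal{P}_1)\cap(\cup\mathcal{P}_2)$'' is simply false in general, so tightness has to be used in an essential way. The idea is that the collapse of the chain turns the containment of Claim~\ref{claim-supseteq} into an equality of multisets (equal cardinality together with $\supseteq$), in particular $\mathcal{E}(\mathcal{P}_1)\uplus\mathcal{E}(\mathcal{P}_2)=\mathcal{E}(\mathcal{P}_3)\uplus\mathcal{E}(\mathcal{P}_4)$. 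Next I would record the elementary fact that ``$Y$ enters $\cup\mathcal{P}$'' implies ``$Y$ enters some member of $\mathcal{P}$'': indeed $Y\setminus\cup\mathcal{P}\neq\emptyset$ gives $Y\setminus X\neq\emptyset$ for every member $X$, while some member meets $Y$ since $Y\cap\cup\mathcal{P}\neq\emptyset$. Consequently $Y\in\mathcal{E}(\mathcal{P}_1)$ and $Y\in\mathcal{E}(\mathcal{P}_2)$, so $Y$ occurs with multiplicity two on the left of the multiset equality, hence also on the right; as $\mathcal{E}(\mathcal{P}_3)$ and $\mathcal{E}(\mathcal{P}_4)$ are ordinary sets, this forces $Y\in\mathcal{E}(\mathcal{P}_3)$.

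Finally I would read off the entering condition from $Y\in\mathcal{E}(\mathcal{P}_3)$: then $Y$ enters some member of $\mathcal{P}_3$, which is contained in $\cup\mathcal{P}_3=(\cup\mathcal{P}_1)\cap(\cup\mathcal{P}_2)$, so $Y\cap\cup\mathcal{P}_3\neq\emptyset$; moreover $Y\setminus\cup\mathcal{P}_3\supseteq Y\setminus\cup\mathcal{P}_1\neq\emptyset$ because $Y$ enters $\cup\mathcal{P}_1$. Thus $Y$ enters $\cup\mathcal{P}_3$, and together with tightness this gives $\mathcal{P}_3\in\mathcal{S}$. The crux of the whole argument is precisely the passage from the forced equality to the multiset identity and the multiplicity count for $Y$; the tightness of $\mathcal{P}_3$ and the verification $Y\setminus\cup\mathcal{P}_3\neq\emptyset$ are then routine bookkeeping with the inequalities already established.
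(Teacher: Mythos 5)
Your proof is correct and takes essentially the same route as the paper: the same cyclic chain (tightness of $\mathcal{P}_1,\mathcal{P}_2$, inequality (\ref{ineq-hb}), (\ref{sufficiency}) applied to $\mathcal{P}_3$ and $\mathcal{P}_4$, and Claim~\ref{claim-supseteq}) forced to collapse into equalities, yielding tightness of $\mathcal{P}_3$ together with the multiset equality $\mathcal{E}(\mathcal{P}_1)\uplus\mathcal{E}(\mathcal{P}_2)=\mathcal{E}(\mathcal{P}_3)\uplus\mathcal{E}(\mathcal{P}_4)$, from which $Y\in\mathcal{E}(\mathcal{P}_3)$ and then the entering condition follow. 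You even fill in two details the paper leaves implicit, namely the multiplicity-two count that forces $Y\in\mathcal{E}(\mathcal{P}_3)$ and the observation that equality in the summed (\ref{sufficiency}) inequality forces equality in each summand.
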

\begin{proof}
By Claim~\ref{claim-supseteq}, we have
\begin{equation}\label{ineq-e}
e_{\mathcal{E} \cup \mathcal{A}}(\mathcal{P}_{1})+e_{\mathcal{E} \cup \mathcal{A}}(\mathcal{P}_{2}) \geq e_{\mathcal{E} \cup \mathcal{A}}(\mathcal{P}_{3})+e_{\mathcal{E} \cup \mathcal{A}}(\mathcal{P}_{4}).
\end{equation}
Since $\mathcal{P}_{1}, \mathcal{P}_{2} \in \mathcal{S}$, by (\ref{sufficiency}) and (\ref{ineq-hb}), we have
\begin{equation}\label{ineq-e-inverse}
\begin{split}
e_{\mathcal{E} \cup \mathcal{A}}(\mathcal{P}_{3})+e_{\mathcal{E} \cup \mathcal{A}}(\mathcal{P}_{4})
&\geq \sum_{X \in \mathcal{P}_{3}} h(X)-b(\cup \mathcal{P}_{3}) +\sum_{X \in \mathcal{P}_{4}} h(X)-b(\cup \mathcal{P}_{4}) ~~\text{(by (\ref{sufficiency}))}  \\
&\geq  \sum_{X \in \mathcal{P}_{1}} h(X)-b(\cup \mathcal{P}_{1}) +\sum_{X \in \mathcal{P}_{2}} h(X)-b(\cup \mathcal{P}_{2}) ~~\text{(by (\ref{ineq-hb}))} \\
&=e_{\mathcal{E} \cup \mathcal{A}}(\mathcal{P}_{1})+e_{\mathcal{E} \cup \mathcal{A}}(\mathcal{P}_{2}) ~~\text{(since $\mathcal{P}_{1}, \mathcal{P}_{2} \in \mathcal{S}$).}
\end{split}
\end{equation}
Combining (\ref{ineq-e}) and (\ref{ineq-e-inverse}), we have all $\geq$ in (\ref{ineq-e}) and (\ref{ineq-e-inverse}) should be $=$.  Hence, we have   $\mathcal{E}(\mathcal{P}_{1})  \uplus \mathcal{E}(\mathcal{P}_{2})  = \mathcal{E}(\mathcal{P}_{3})  \uplus \mathcal{E}(\mathcal{P}_{4}) $ and $e_{\mathcal{E} \cup A}(\mathcal{P}_{3})=\sum_{X \in \mathcal{P}_{3}} h(X)-b(\cup \mathcal{P}_{3})$. Since $\mathcal{P}_{1}, \mathcal{P}_{2} \in \mathcal{S}$, $Y$ enters both $\cup \mathcal{P}_{1}$ and $\cup \mathcal{P}_{2}$ and thus $ Y \in \mathcal{E}(\mathcal{P}_{1}) \cap \mathcal{E}(\mathcal{P}_{2})$. Since $\mathcal{E}(\mathcal{P}_{1})  \uplus \mathcal{E}(\mathcal{P}_{2})  = \mathcal{E}(\mathcal{P}_{3})  \uplus \mathcal{E}(\mathcal{P}_{4}) $, we have $Y \in \mathcal{E}(\mathcal{P}_{3})$ and thus $Y$ enters $\cup \mathcal{P}_{3}$ or $Y \subseteq \cup \mathcal{P}_{3}$. Note that $\cup \mathcal{P}_{3}= (\cup \mathcal{P}_{1} )\cap (\cup \mathcal{P}_{2} )$. Since $Y$ enters both $\cup \mathcal{P}_{1}$ and $\cup \mathcal{P}_{2}$, we have $Y \nsubseteq \cup \mathcal{P}_{3}$ and thus $Y$ enters  $\cup \mathcal{P}_{3}$. Hence, $\mathcal{P}_{3} \in \mathcal{S}$.
\end{proof}

It follows from the discussion above that for any $\mathcal{P}_{1}, \mathcal{P}_{2} \in \mathcal{S}$, there always exists a $\mathcal{P}_{3} \in S$ such that $\cup \mathcal{P}_{3}= (\cup \mathcal{P}_{1}) \cap (\cup \mathcal{P}_{2})$. Since $|\mathcal{S}|$ is finite, we can find a $\mathcal{P}_{0} \in \mathcal{S} $ such that $\cup \mathcal{P}_{0}= \cap \{ \cup \mathcal{P}\}_{\mathcal{P} \in \mathcal{S}}$. Since $Y$ enters $\cup \mathcal{P}_{0}$, we have $Y \cap (\cup \mathcal{P}_{0}) \neq \emptyset$ and suppose $y \in Y \cap (\cup \mathcal{P}_{0})$. Then for any $\mathcal{P} \in \mathcal{S}$, $y \in \cup \mathcal{P}$. Orient the hyperedge  $Y$ to the dyperedge $(Y-y,y)$. Then for any $\mathcal{P} \in \mathcal{S}$, $(Y-y, y)$  enters $\cup \mathcal{P}$ and $e_{(\mathcal{E}-Y)\cup (\mathcal{A} + (Y-y,y))}(\mathcal{P})=e_{\mathcal{E} \cup \mathcal{A}}(\mathcal{P})$. Hence, (\ref{sufficiency}) still holds after the orientation.

\section{Acknowledgement}

The author was supported by the National Natural Science Foundation of China (No. 12201623) and the Natural Science Foundation of Jiangsu Province, China (No. BK20221105).

\end{document}